\documentclass[12pt]{amsart}
\usepackage[foot]{amsaddr}
\usepackage{amsmath, amssymb, amsthm, thmtools, hyperref}
\usepackage[sort]{cite}
\usepackage[charter]{mathdesign}
\usepackage[margin=1.25in]{geometry}
\usepackage{eucal}
\usepackage{tikz-cd}

\title{The canonical syzygy conjecture for ribbons}
\author{Anand Deopurkar}
\address{The University of Georgia}
\address{Email: \url{deopurkar@uga.edu}}
\subjclass[2010]{14H51, 13D02}
\keywords{Canonical syzygy conjecture, ribbons, Green's conjecture, Koszul cohomology}

\DeclareMathOperator{\Cliff}{Cliff}
\DeclareMathOperator{\id}{id}
\DeclareMathOperator{\im}{im}

\newcommand{\stack}[1]{\mathcal{#1}}
\newcommand{\orb}[1]{{#1}}

\newcommand{\red}{\rm red}
\renewcommand{\O}{{\mathcal O}}
\renewcommand{\k}{\mathbb{K}}

\ifcsname theorem\endcsname{}\else\declaretheorem[parent=section]{theorem}\fi
\ifcsname corollary\endcsname{}\else\declaretheorem[sibling=theorem]{corollary}\fi
\ifcsname lemma\endcsname{}\else\declaretheorem[sibling=theorem]{lemma}\fi
\ifcsname proposition\endcsname{}\else\declaretheorem[sibling=theorem]{proposition}\fi
\ifcsname conjecture\endcsname{}\else\declaretheorem[sibling=theorem]{conjecture}\fi
\ifcsname problem\endcsname{}\else\fi
\ifcsname question\endcsname{}\else\fi
\ifcsname definition\endcsname{}\else\fi
\ifcsname exercise\endcsname{}\else\fi
\ifcsname example\endcsname{}\else\fi
\ifcsname remark\endcsname{}\fi

\renewcommand {\P}{{\bf P}}

\providecommand {\A}{{\bf A}}

\providecommand {\from}{{\colon}}

\providecommand{\spec}{\operatorname{Spec}}

\providecommand{\coker}{\operatorname{coker}}

\providecommand{\Hom}{\operatorname{Hom}}
\providecommand{\Ext}{\operatorname{Ext}}

\begin{document}

\maketitle

\begin{abstract}
  Green's canonical syzygy conjecture asserts a simple relationship between the Clifford index of a smooth projective curve and the shape of the minimal free resolution of its homogeneous ideal in the canonical embedding.
  We prove the analogue of this conjecture formulated by Bayer and Eisenbud for a class of non-reduced curves called ribbons.
  Our proof uses the results of Voisin and Hirschowitz--Ramanan on Green's conjecture for general smooth curves.
\end{abstract}

\section{Introduction}
Let $X$ be a projective variety over a field $\k$ and $L$ a very ample line bundle on $X$.
Consider the embedding $X \subset \P^n$ given by the complete linear system $|L|$.
Associated to $X \subset \P^n$ is the homogeneous ideal $I \subset S = \k[x_0, \dots, x_n]$.
The data of $(X, L)$ and the data of $(S, I)$ represent two different points of view of studying the same object---a geometer would rather work with $(X, L)$ whereas an algebraist would rather work with $(S, I)$.
But since $(X, L)$ and $(S, I)$ contain the same information, there ought to be a connection between the properties of $(X, L)$ that a geometer would study with the properties of $(S, I)$ that an algebraist would.

The question of finding relationships between the geometric properties of $(X, L)$ and the algebraic properties of $(S, I)$ has led to fascinating theorems and conjectures.
One of the most deeply studied cases is where $X$ is a smooth curve (but see \cite{gre:84} and \cite{ein.laz:12} for more general results, especially in an asymptotic setting).
In this case, the work of Green \cite{gre:84} and Green--Lazarsfeld \cite{gre.laz:86} predicts a simple relationship between the Clifford index of $X$ and the shape of the minimal free resolution of $S/I$ as an $S$ module.
When $L$ is the canonical bundle, this relationship is the content of Green's canonical syzygy conjecture, which we now recall.
We state it in terms of the Koszul cohomology groups $K_{p,q}$.
Recall that when $X \subset \P^n$ is projectively normal, $K_{p,q}(X, L)$ is naturally identified with the graded component of degree $(p+q)$ in the $p$th term of the minimal free resolution of $S/I$.

From here on, let $\k$ be an algebraically closed field of characteristic zero.
\begin{conjecture}[Green's canonical syzygy conjecture \cite{gre:84}]
  \label{conj:green}
  Let $C$ be a smooth projective curve of genus $g \geq 2$ over $\k$.
  The Koszul cohomology group $K_{p,2}(C, \omega_C)$ vanishes if and only if $p$ is smaller than the Clifford index of $C$.
\end{conjecture}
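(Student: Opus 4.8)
The conjecture consists of a non-vanishing assertion and a vanishing assertion of very different character, and I would treat them separately. The non-vanishing direction --- if $p \geq \Cliff(C)$ then $K_{p,2}(C,\omega_C) \neq 0$ --- is essentially a construction: from a line bundle, together with a subspace of its sections, computing $\Cliff(C)$, one manufactures explicit nonzero Koszul classes by pushing forward syzygies of the associated rational scroll, and non-vanishing then propagates to all $p \geq \Cliff(C)$. This is the Green--Lazarsfeld non-vanishing theorem, and I would import it wholesale. The real content is the converse: $K_{p,2}(C,\omega_C) = 0$ whenever $p < \Cliff(C)$, equivalently the single vanishing $K_{\Cliff(C)-1,\,2}(C,\omega_C) = 0$.

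For the vanishing direction the plan has two stages --- reduce to a generic curve, then compute on a degeneration. For the reduction, one uses that the groups $K_{p,2}$ are upper semicontinuous in a flat family of canonically embedded curves, so the locus where a given $K_{p,2}$ vanishes is open in $\mathcal{M}_g$; comparing such loci with the Brill--Noether (gonality) divisors --- the mechanism of Hirschowitz--Ramanan --- one finds it enough to establish the predicted vanishing for one sufficiently general curve of each genus, together with standard hyperplane-section arguments cutting the $p$-th Koszul group of a special curve down to lower genus. For the base case I would realise a general curve of the relevant gonality inside a K3 surface and read $K_{p,2}$ off the surface, where Hodge theory forces the vanishing; this is Voisin's method, and it settles general curves of every genus. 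An alternative degeneration target --- the one in the spirit of the present paper --- is a canonical \emph{ribbon}: the non-reduced, locally Cohen--Macaulay double structure of arithmetic genus $g$ supported on a $\P^1$, whose square-zero ideal encodes the limiting gonality pencil. A ribbon is a flat limit of smooth $d$-gonal curves, its canonical ring and the associated Koszul complexes admit an essentially linear-algebraic description, and one further semicontinuity step carries $K_{p,2} = 0$ from the ribbon to the general smooth curve smoothing it.

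The step I expect to be the main obstacle is the base case of the vanishing --- actually proving $K_{p,2} = 0$ on the chosen special object. On a K3 surface this is the technical heart of Voisin's work and rests on a subtle analysis of Hilbert schemes of points and their relation to Koszul cohomology; on a ribbon it becomes the problem of showing that an explicit Koszul differential built from the gonality pencil attains maximal rank in the relevant degrees, the difficulty being precisely that the ribbon's structure sheaf fuses this linear algebra with the Brill--Noether geometry of the pencil. Everything else --- the Green--Lazarsfeld non-vanishing, semicontinuity, the comparison with Brill--Noether divisors --- is comparatively formal, so I would concentrate the effort on that rank statement. It is exactly this statement that the ribbon reformulation isolates in a clean form, and one that can in turn be matched against the known cases of Green's conjecture for general smooth curves established by Voisin and by Hirschowitz--Ramanan.
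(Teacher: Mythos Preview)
The statement you are addressing is a \emph{conjecture}; the paper does not prove it, and in full generality it remains open. What the paper actually proves is the ribbon analogue (\autoref{conj:ribbon-green}), and from that deduces only the \emph{generic} case of \autoref{conj:green}: the conjecture holds for a non-empty open subset of curves of each genus and Clifford index. Your proposal claims considerably more.

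The genuine gap is in your reduction step. Upper semicontinuity of $K_{p,2}$ tells you that vanishing is an open condition, so vanishing for a general curve says nothing about a specific curve of the same genus and Clifford index. The Hirschowitz--Ramanan mechanism compares the Koszul divisor with the $(k+1)$-gonal divisor in odd genus $g=2k+1$ and, combined with Voisin, shows they are equal as divisors; this yields the conjecture precisely for smooth curves of odd genus and \emph{maximal} Clifford index (those off the divisor), not for arbitrary curves. Your sentence ``together with standard hyperplane-section arguments cutting the $p$-th Koszul group of a special curve down to lower genus'' is doing all of the remaining work, and no such argument is known for smooth curves: there is no general way to realise an arbitrary smooth curve of Clifford index $c$ as a hyperplane section (or analogous slice) of a curve of higher genus and maximal Clifford index. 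The paper's contribution is exactly that this \emph{does} work for ribbons, via blow-ups (\autoref{thm:blow-up-clifford} and \autoref{thm:blow-up}), which is why the ribbon conjecture can be settled completely while the smooth one cannot. Your proposal conflates these two situations.
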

The statement of the conjecture generalizes classical results of Noether and Petri, namely that a canonically embedded curve is projectively normal, and that its homogeneous ideal is generated by quadrics if it does not carry a $g^1_3$ or a $g^2_5$.

In 1995, Bayer and Eisenbud laid out an approach to prove a generic version of Green's conjecture by a degeneration argument.
They considered a special class of degenerate curves, called \emph{ribbons}.
These are double structures on $\P^1$ locally isomorphic to $\spec \k[s, \epsilon]/\epsilon^2$, and they arise as flat limits of families of canonically embedded curves as the curves approach a hyperelliptic curve.
Despite being non-reduced, ribbons retain many nice features of smooth curves, such as a projectively normal canonical embedding and a sensible notion of Clifford index.
In terms of this Clifford index, we have the following analogue of \autoref{conj:green} due to Bayer and Eisenbud.
\begin{conjecture}[Canonical syzygy conjecture for ribbons \cite{bay.eis:95}]
  \label{conj:ribbon-green}
  Let $C$ be a ribbon over $\k$ of arithmetic genus $g$.
  Then $K_{p,2}(C, \omega_C)$ vanishes if and only if $p$ is smaller than the (ribbon) Clifford index of $C$.
\end{conjecture}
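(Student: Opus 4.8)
The plan is to deduce the conjecture from its already-known counterpart for smooth curves by a degeneration argument. The statement has two halves: a non-vanishing half, $K_{c,2}(C,\omega_C)\neq 0$ where $c=\Cliff(C)$, and a vanishing half, $K_{p,2}(C,\omega_C)=0$ for $p<c$. The non-vanishing half is the ribbon analogue of the Green--Lazarsfeld non-vanishing theorem and should follow, as in the work of Bayer and Eisenbud \cite{bay.eis:95}, by producing an explicit nonzero Koszul class from a generalized pencil on $C$ computing the ribbon Clifford index; so I would concentrate on the vanishing half. Fix a ribbon $C$ of arithmetic genus $g$ with $\Cliff(C)=c$, so $1\le c\le\lfloor(g-1)/2\rfloor$. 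Using the smoothability of ribbons together with the relation between the ribbon Clifford index and the gonality of a smoothing, I would realize $C$ as the special fiber of a flat projective family $\pi\colon\mathcal{C}\to\spec R$, with $R$ a complete discrete valuation ring with residue field $\k$ and fraction field, say, $\k((t))$, whose generic fiber $\mathcal{C}_\eta$ is a smooth curve of genus $g$ and gonality $c+2$ --- and I would choose this smoothing so that $\mathcal{C}_\eta$ is general enough that Green's conjecture is known for it. Throughout I would use that every fiber of $\pi$, the ribbon included, is arithmetically Gorenstein of genus $g$ with the expected canonical Hilbert function (so that $\pi_*\omega_{\mathcal{C}/R}^{\otimes j}$ is locally free of the expected rank for all $j\ge 1$), which is precisely the kind of structural fact about ribbons established by Bayer and Eisenbud \cite{bay.eis:95}.

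On the generic fiber I would invoke the resolution of Green's conjecture for smooth curves: by Voisin's theorems via curves on K3 surfaces, by Hirschowitz--Ramanan for the maximal-gonality divisor in odd genus, and by the subsequent refinements for the intermediate gonality strata, one has $K_{p,2}(\mathcal{C}_\eta,\omega_{\mathcal{C}_\eta})=0$ for all $p<c$. Since $\k((t))$ is flat over $R$ and Koszul cohomology commutes with flat base change, this says that the finitely generated $R$-module $K_{p,2}(\mathcal{C}/R,\omega_{\mathcal{C}/R})$ is torsion for each $p<c$. A point requiring care here is the genericity hypothesis on $\mathcal{C}_\eta$: I must know that \emph{every} ribbon of Clifford index $c$ (not merely the generic one) admits a smoothing whose generic fiber lies in the locus where Green's conjecture is available, and I would extract this from Bayer and Eisenbud's description of the family of ribbons with fixed Clifford index.

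The crux is transferring the vanishing from $\mathcal{C}_\eta$ to the ribbon $C$, and this is where I expect the main obstacle to lie: because $\dim K_{p,2}$ is only \emph{upper} semicontinuous, torsion-ness of $K_{p,2}(\mathcal{C}/R)$ does not by itself force its fiber over the closed point to vanish --- a priori the Koszul differentials could drop rank there. To handle this I would rigidify the ambient geometry. A $(c+2)$-gonal canonical curve, and likewise the canonically embedded ribbon, lies on a rational normal scroll of dimension $c+1$, and this scroll degenerates flatly together with the curve, giving $\mathcal{C}\hookrightarrow\mathcal{S}$ with $\mathcal{S}\to\spec R$ a flat family of scrolls. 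The minimal free resolution of $\mathcal{O}_{\mathcal{C}}$ over $\mathcal{O}_{\P^{g-1}_R}$ can then be built uniformly in $t$ from the Eagon--Northcott resolution of $\mathcal{S}$ via the relative canonical resolution and a mapping cone, so that the graded Betti numbers of the ribbon and of $\mathcal{C}_\eta$ are read off one and the same complex of free $R$-modules; the vanishing on $\mathcal{C}_\eta$ then propagates once one checks that the pertinent differentials specialize to $\spec\k$ without loss of rank. Reducing that last point to a tractable statement about Fitting ideals of explicit matrices --- and, dually, using the duality $K_{p,2}(C,\omega_C)^\vee\cong K_{g-2-p,1}(C,\omega_C)$ for the arithmetically Gorenstein curve $C$ together with the equality of Hilbert functions to exclude a new linear syzygy appearing at $t=0$ --- is the heart of the argument. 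Combining the resulting vanishing with the non-vanishing half yields the ``if and only if'' of the conjecture.
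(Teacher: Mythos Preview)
Your proposal correctly isolates the non-vanishing direction as already handled by Bayer--Eisenbud, and correctly identifies the central obstacle to a naive degeneration argument: $\dim K_{p,2}$ is only \emph{upper} semicontinuous, so vanishing on the generic fiber of a smoothing does not by itself force vanishing on the ribbon. However, your proposed workaround does not close this gap. Passing to the scroll and the relative canonical resolution merely repackages the same difficulty: one must still show that the relevant differentials do not drop rank at $t=0$, and your appeal to ``Fitting ideals of explicit matrices'' and ``equality of Hilbert functions to exclude a new linear syzygy'' is not an argument. Equality of Hilbert functions is precisely the situation in which Betti numbers can jump while the specialized complex remains a (non-minimal) resolution; controlling minimality of the relative canonical resolution on a scroll is a well-known delicate problem and is not settled by the tools you invoke. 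There is also a secondary gap: you need the smoothing to have gonality \emph{exactly} $c+2$, whereas semicontinuity of the Clifford index only gives $\Cliff(\mathcal{C}_\eta)\le c$.

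The paper's proof avoids the semicontinuity obstacle by a genuinely different mechanism, in two steps. First, it reduces everything to the single case of a ribbon $\overline{C}$ of odd genus $2k+1$ and maximal Clifford index $k$: given any ribbon of genus $g$ and Clifford index $c$, write $g=2k+1-i$, $c=k-i$, realize a ribbon of these invariants as an $i$-fold blow-up of such a $\overline{C}$, and use the injection $K_{k,1}(C',\omega_{C'})\hookrightarrow K_{k,1}(C,\omega_C)$ under blow-up (Voisin's projection trick) together with Gorenstein duality and the fact that all ribbons of fixed $(g,c)$ share the same Betti table. Second, in the odd-genus maximal case the paper does not merely use that $K_{k-1,2}$ vanishes for a general smooth curve; it uses the much stronger input, from Hirschowitz--Ramanan combined with Voisin, that the determinantal locus $D=\{\det\delta=0\}$ (where $\coker\delta$ computes $K_{k-1,2}$) equals the $(k{+}1)$-gonal divisor $D_{k+1}$ \emph{as divisors} on $M_g^{\rm nh}$. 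This equality of divisors extends to a neighborhood of the ribbon because the non-nodal locus has codimension at least $2$ there. Now semicontinuity of the Clifford index, running in the \emph{useful} direction, shows that a ribbon of maximal Clifford index cannot lie on $D_{k+1}$, hence not on $D$, hence $K_{k-1,2}(\overline{C},\omega_{\overline{C}})=0$. The two ideas your outline is missing are thus (i) the divisor-level equality $D=D_{k+1}$ as the device that transports vanishing across the specialization, and (ii) the blow-up reduction to the single extremal case where that device is available.
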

By the semi-continuity of the Clifford index and the smoothability results of Fong \cite{fon:93}, \autoref{conj:ribbon-green} implies \autoref{conj:green} for a general curve of every Clifford index (see the proof of \autoref{cor:generic_green}).

Having described the conjectures, let us review what is known.
Schreyer and Teixidor I Bigas settled \autoref{conj:green} for general $p$-gonal curves of genus $g$, where $g$ is large compared to $p$.
Schreyer settled the cases $g > (p-1)(p-2)$ \cite{sch:89}; Teixidor I Bigas advanced it much further to handle $g \geq 3p+2$ \cite{tei-i-big:02}.
In two breakthrough papers, Voisin proved \autoref{conj:green} when $C$ is general in moduli \cite{voi:02,voi:05}.
Combined with prior work of Hirschowitz and Ramanan \cite{hir.ram:98}, her work implies that \autoref{conj:green} holds for every smooth curve of odd genus and maximum Clifford index.
Since then, several authors---Aprodu, Farkas, Lelli-Chiesa, and Pacienza---have made remarkable progress on \autoref{conj:green}; it has been proved for general curves of every gonality, for curves lying on a K3 surface, and for certain curves on rational surfaces \cite{apr:05,apr.pac:08,apr.far:11*1,apr.far:12,lel:13}.
Yet, the full conjecture remains unproved.

In this paper, we prove \autoref{conj:ribbon-green}.
Our proof uses the results of Voisin and Hirschowitz--Ramanan.
 \begin{theorem}
  \label{thm:main}
  The canonical syzygy conjecture (\autoref{conj:ribbon-green}) holds for every ribbon.
\end{theorem}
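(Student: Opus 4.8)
The plan is to derive \autoref{thm:main} from Green's conjecture for general smooth curves --- known in odd genus with maximal Clifford index by Voisin together with Hirschowitz--Ramanan --- not by naive specialisation (which goes the wrong way, since Koszul cohomology can only jump up under degeneration) but by transporting the \emph{divisorial} structure of the locus where the conjecture fails. First some reductions. A ribbon $C$ of arithmetic genus $g$ is Gorenstein with projectively normal canonical model, so Green's self-duality $K_{p,2}(C,\omega_C)^\vee\cong K_{g-2-p,1}(C,\omega_C)$ applies; thus \autoref{conj:ribbon-green} is self-dual and it is enough to prove the vanishing $K_{p,2}(C,\omega_C)=0$ for $p<\Cliff(C)=:c$. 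The complementary non-vanishing for $p\ge c$ is built into the Bayer--Eisenbud definition of the ribbon Clifford index, being produced by a pencil computing $c$ via a Green--Lazarsfeld-type non-vanishing. Finally, a reduction in the Clifford index --- of the kind used to propagate Green's conjecture between Clifford indices, carried out internally to the moduli of such curves and ribbons --- lets one assume $g=2k+1$ is odd and $c=k$ is maximal, so that the remaining task is: $K_{k-1,2}(C,\omega_C)=0$ for every ribbon $C$ of genus $2k+1$ of maximal Clifford index.

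For this I would work over an irreducible parameter variety $M$ of canonical curves of genus $g=2k+1$ that contains both a dense open subset of $\mathcal M_g$ and a locus of ribbons containing our $C$, chosen so that the relative dualizing sheaf and its tensor powers have fibrewise-constant cohomology over $M$ --- legitimate because a ribbon of arithmetic genus $g$ has the expected plurigenera. The Koszul differential computing $K_{k-1,2}(-,\omega)$ is then, following Hirschowitz--Ramanan and using the generic vanishing of the neighbouring Koszul groups, realised as a map $\phi\from E\to F$ of vector bundles of equal rank on $M$, and the failure locus $\mathcal Z=\{\det\phi=0\}$ is either a divisor or all of $M$. Voisin's theorem rules out the latter, so $\mathcal Z$ is a divisor; on the smooth locus it contains $\overline{\mathcal{M}^1_{g,k+1}}$ by Green--Lazarsfeld, and the Hirschowitz--Ramanan computation of $[\mathcal Z]$ forces equality there, whence $\mathcal Z=\overline{\mathcal{M}^1_{g,k+1}}$ as divisors on $M$ --- provided the ribbon locus is not itself a component of $\mathcal Z$. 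That it is not amounts to exhibiting a single ribbon of maximal Clifford index with $K_{k-1,2}=0$, and this is where the explicit geometry enters: the canonical model of a ribbon of maximal Clifford index lies on a rational normal scroll, and the methods of Schreyer for canonical resolutions of subvarieties of scrolls reduce the vanishing for the generic such ribbon to a concrete computation on the scroll.

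Granting this, the conclusion is immediate: a ribbon $C$ of genus $2k+1$ with maximal Clifford index cannot lie in $\overline{\mathcal{M}^1_{g,k+1}}=\mathcal Z$, for otherwise it would be a flat limit of $(k+1)$-gonal smooth curves, and semicontinuity of $h^0$ applied to the limit of their degree-$(k+1)$ line bundles would endow $C$ with a pencil of degree at most $k+1$, contradicting the maximality of $\Cliff(C)$; hence $K_{k-1,2}(C,\omega_C)=0$, and together with the reductions and the non-vanishing for $p\ge c$ this establishes \autoref{conj:ribbon-green} for every ribbon. The main obstacle is the input of the previous paragraph --- setting up the family $M$ so that $E$, $F$ and $\phi$ genuinely extend over the ribbon locus compatibly with the Hirschowitz--Ramanan construction, and, above all, the direct verification that generic ribbons of maximal Clifford index satisfy Green's conjecture via the analysis of the canonical ribbon on its scroll. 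This last step is the one thing that cannot be imported from the smooth theory, and it is the technical heart of the argument.
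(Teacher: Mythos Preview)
Your overall divisorial strategy matches the paper's: realise the Koszul failure locus $\mathcal Z$ as a determinantal divisor on a moduli space containing both smooth curves and ribbons, use Voisin and Hirschowitz--Ramanan to identify $\mathcal Z$ with the closure of the $(k+1)$-gonal locus on the smooth part, and conclude that a ribbon of maximal Clifford index cannot lie in $\mathcal Z$ by semicontinuity of the Clifford index. The reduction from arbitrary Clifford index to the maximal case, which you describe only vaguely, is done in the paper via iterated blow-ups of ribbons together with the injection $K_{k,1}(C',\omega_{C'})\hookrightarrow K_{k,1}(C,\omega_C)$ for $C'\to C$ a blow-up at a closed point.

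The genuine gap is the step you yourself flag as the technical heart: verifying $K_{k-1,2}=0$ for a \emph{single} ribbon of maximal Clifford index by a direct Schreyer-type computation on the scroll. This is precisely the approach Bayer and Eisenbud proposed and could not carry through; the whole point of the present paper is to circumvent it. Notice also that your argument is logically redundant here: if that computation could be done, the divisor argument would be unnecessary, since by the carpet theorem all canonically embedded ribbons of a given genus and Clifford index have identical graded Betti numbers, so one verification settles them all.

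The paper's replacement for this step is the following. First, using Voisin's result that $K_{k-1,2}=0$ for \emph{irreducible nodal} curves in linear systems on K3 surfaces, one extends the divisor equality $\mathcal Z=\overline{\mathcal M^1_{g,k+1}}$ from $\mathcal M_g^{\rm nh}$ to the full locus of at-worst-nodal canonical curves. Second, instead of computing anything on a ribbon, one shows that in a versal deformation of a ribbon the locus of curves with \emph{worse than nodal} singularities has codimension at least~$2$; since divisors are determined in codimension one, the equality $\mathcal Z=\overline{\mathcal M^1_{g,k+1}}$ then extends across the ribbon point automatically. The codimension-$2$ statement is elementary: every ribbon degenerates isotrivially to the hyperelliptic ribbon, which is visibly a codimension-$2$ degeneration inside the family $y^2=a_{2g+2}x^{2g+2}+\dots+a_0$ of hyperelliptic curves. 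This is the missing idea in your proposal.
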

As a result, we get another proof of the following.
\begin{corollary}\label{cor:generic_green}
  Green's canonical syzygy conjecture (\autoref{conj:green}) holds for a non-empty open subset of curves of a given genus and Clifford index.
\end{corollary}
\begin{proof}
  In the appendix to \cite{gre:84}, Green and Lazarsfeld show that $K_{p,2}(C) \neq 0$ for $p \geq c$ for \emph{every} smooth curve $C$ of Clifford index $c$.
  The hard part is the converse.

  We now show the converse for a non-empty open subset of curves with Clifford index $c$.
  Start with a ribbon $C_0$ of Clifford index $c$.
  By \autoref{thm:main}, we have $K_{p,2}(C_0, \omega_{C_0}) = 0$ for $p < c$.
  By \cite[Theorem~2]{fon:93}, $C_0$ is the limit of a family of curves $C_t$ whose generic member is a smooth curve of Clifford index $c$.
  Since the vanishing of $K_{p,2}(C_t, \omega_{C_t})$ is an open condition, and it holds for $t = 0$, it holds for the generic $t$, and hence for a non-empty open subset of curves with Clifford index $c$.
\end{proof}
The proof of Theorem~2 of \cite{fon:93} shows that a ribbon of Clifford index $c$ is a limit of a family of curves whose generic member is a smooth curve of gonality $c+2$.
Therefore, the argument of \autoref{cor:generic_green} in fact yields the following stronger statement.
\begin{corollary}
  Green's conjecture (\autoref{conj:green}) holds for a dense open subset of curves of a given genus and gonality.
\end{corollary}

It would be wonderful to have a proof of \autoref{conj:ribbon-green} independent of Voisin's proof of \autoref{conj:green} for general curves.
It would give a new proof of Voisin's difficult theorem and would address the original motivation of Bayer and Eisenbud behind studying ribbons.
Nevertheless, any proof is better than no proof.

Irrespective of the original motivation, the canonical syzygy conjecture for ribbons is important for another reason, which comes from recent progress in the log minimal model program for $\overline{M}_g$.
For a canonical curve $C \subset \P^{g-1}$ with vanishing $K_{p,2}(C, \omega_C)$, we can define a point in a Grassmannian called the \emph{$p$th syzygy point} of $C$.
This point encodes the vector space of $p$th syzygies among the generators of the homogeneous ideal of $C$ (see \cite{deo.fed.swi:16} for details).
The GIT quotients of the loci of $p$th syzygy points are expected to lead to the canonical model of $\overline M_{g}$.
To describe this canonical model, it is essential to know which curves (if any) have GIT semi-stable syzygy points.
We expect that the $p$th syzygy point of a general ribbon will be GIT semi-stable.
The vanishing of $K_{p,2}$ shows that the $p$th syzygy point is at least well-defined.

The paper is organized as follows.
\autoref{sec:ribbons} contains basic results about ribbons and their deformations.
\autoref{sec:generic} proves \autoref{thm:main} for ribbons of odd genus and maximum Clifford index.
\autoref{sec:all} extends the result to all ribbons.
All schemes and stacks are locally of finite type over $\k$.

\section{Ribbons}\label{sec:ribbons}
We quickly review the theory of ribbons from \cite{bay.eis:95}.
Let $D$ be a reduced and connected scheme over $\k$.
A \emph{ribbon} over $D$ is a scheme $C$ with an isomorphism $D \to C_{\red}$ such that the ideal $I$ of $D \subset C$ satisfies $I^2 = 0$ and is locally free of rank 1 when considered as a sheaf on $D$.
A ribbon over $\P^1$ is called a \emph{rational ribbon}.
All our ribbons will be rational, so we drop this adjective.

A ribbon $C$ of arithmetic genus $g$ gives an exact sequence
\begin{equation}\label{eqn:cotangent}
  0 \to \O_{\P^1}(-g-1) \to \Omega_C|_{\P^1} \to \Omega_{\P^1} \to 0.
\end{equation}
We have an isomorphism $\Omega_C|_{\P^1} \cong \O_{\P^1}(-a-2) \oplus \O_{\P^1}(-b-2)$ for some integers $a$ and $b$ with $0 \leq a \leq b \leq g-1$ and $a+b = g-1$.
The \emph{Clifford index $\Cliff(C)$} of $C$ is defined to be the integer $a$.
We say that $C$ is \emph{hyperelliptic} if the following equivalent conditions hold: (i) the inclusion $\P^1 \subset C$ admits a retraction $C \to \P^1$, (ii) $\Cliff(C) = 0$, (iii) the sequence in \eqref{eqn:cotangent} is split.

A ribbon of arithmetic genus $g$ may be described explicitly as obtained from gluing $U_1 = \spec \k[s,\epsilon]/\epsilon^2$ and $U_2 = \spec \k[t, \eta]/\eta^2$ by isomorphisms
\begin{equation}
  \label{eqn:gluing}
  \begin{split}
    \epsilon = t^{-g-1} \eta \\
    s^{-1} = t + F(t) \eta,
  \end{split}
\end{equation}
on $U_1 \cap U_2$, where $F(t) \in \k[t, t^{-1}]$.
In fact, what matters is only the image of $F(t)$ in $\k[t,t^{-1}]/(\k[t]+t^{-g+1}\k[t^{-1}])$.
We abuse notation and denote this image also by $F(t)$.
The element $F(t)$ corresponds to the extension \eqref{eqn:cotangent} in $\Ext^1(\Omega_{\P^1}, \O_{\P^1}(-g-1))$.
In particular, $F(t) = 0$ if and only if $C$ is hyperelliptic.
Two ribbons $C_1$ and $C_2$ are isomorphic by a map that restricts to the identity on the underlying $\P^1$ if and only if $F_1(t) = cF_2(t)$ for some $c \in \k^*$.

Let $x \in C$ be a closed point, $\beta \from C' \to C$ the blow up at $x$, and $E \subset C'$ the exceptional divisor.
Then $C'$ is a ribbon of arithmetic genus $g-1$ and $\omega_{C'} = \beta^*\omega_C(-E)$.
Blowing up is related to the Clifford index as follows.
\begin{proposition}[See {\cite[Corollary~2.5]{bay.eis:95}}]
  \label{thm:blow-up-clifford}
The Clifford index  $\Cliff(C)$ is the smallest number $k$ such that there exists a sequence
  \[ C_k \to \dots \to C_1 \to C_0 = C\]
  where $C_{i+1} \to C_i$ is a blow-up at a closed point and $C_k$ is hyperelliptic.
\end{proposition}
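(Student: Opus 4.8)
The plan is to understand how a single blow‑up changes the Clifford index, and then iterate. I will prove two facts: (a) for every blow‑up $\beta \from C' \to C$ at a closed point, $\Cliff(C') \geq \Cliff(C) - 1$; and (b) if $C$ is not hyperelliptic, then there is a closed point $x \in \P^1$ for which the blow‑up $C'$ of $C$ at $x$ satisfies $\Cliff(C') = \Cliff(C) - 1$. These suffice: by (a), any chain $C_k \to \dots \to C_0 = C$ with $C_k$ hyperelliptic has $\Cliff(C) \leq \Cliff(C_1) + 1 \leq \dots \leq \Cliff(C_k) + k = k$, so $\Cliff(C)$ is at most the minimal length of such a chain; and iterating (b) starting from $C$ produces a chain of length exactly $\Cliff(C)$ terminating at a ribbon of Clifford index $0$, that is, a hyperelliptic one.

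The common ingredient is a description of the blow‑up at $x \in \P^1 = C_{\red}$, which I will read off from the gluing data \eqref{eqn:gluing}: $\beta$ is an isomorphism on reduced curves, the ideal of $\P^1$ in $C'$ is $\O_{\P^1}(-g-1)(x) = \O_{\P^1}(-g)$, and the sequence \eqref{eqn:cotangent} for $C'$ is the pushout of \eqref{eqn:cotangent} for $C$ along $\O_{\P^1}(-g-1) \hookrightarrow \O_{\P^1}(-g)$; concretely, blowing up at $x$ multiplies the class $F$ by a linear form vanishing at $x$ and lowers the arithmetic genus by one. In particular the two copies of \eqref{eqn:cotangent} fit into a commutative ladder whose left vertical arrow has cokernel $\O_x$, so the middle arrow $\Omega_C|_{\P^1} \to \Omega_{C'}|_{\P^1}$ is injective with cokernel $\O_x$; dually, $(\Omega_{C'}|_{\P^1})^\vee$ embeds in $(\Omega_C|_{\P^1})^\vee$ with cokernel $\O_x$.

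Write $\Omega_C|_{\P^1} = \O_{\P^1}(-a-2) \oplus \O_{\P^1}(-b-2)$ and $\Omega_{C'}|_{\P^1} = \O_{\P^1}(-a'-2) \oplus \O_{\P^1}(-b'-2)$, with $a = \Cliff(C) \leq b$, $a' = \Cliff(C') \leq b'$, $a + b = g-1$, $a' + b' = g-2$. For (a): the maximal sub‑line bundle of $(\Omega_{C'}|_{\P^1})^\vee = \O_{\P^1}(a'+2) \oplus \O_{\P^1}(b'+2)$ has degree $b'+2$; composing its inclusion with $(\Omega_{C'}|_{\P^1})^\vee \hookrightarrow (\Omega_C|_{\P^1})^\vee$ gives a sub‑line bundle of $(\Omega_C|_{\P^1})^\vee = \O_{\P^1}(a+2)\oplus\O_{\P^1}(b+2)$ of degree at least $b'+2$, whose degree is therefore at most $b+2$; hence $b' \leq b$, i.e.\ $a' = (g-2) - b' \geq a - 1$. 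For (b), suppose $a \geq 1$ and dualize \eqref{eqn:cotangent} to a surjection $\rho \from \O_{\P^1}(a+2) \oplus \O_{\P^1}(b+2) \to \O_{\P^1}(g+1)$ with kernel $\O_{\P^1}(2)$. Its restriction to the summand $\O_{\P^1}(b+2)$ is a section $q$ of $\O_{\P^1}(g+1-(b+2)) = \O_{\P^1}(a)$, and $q \neq 0$ — otherwise $\O_{\P^1}(b+2) \subseteq \ker\rho = \O_{\P^1}(2)$ would force $b = 0$, hence $C$ hyperelliptic. Since $\deg q = a \geq 1$, choose a zero $x$ of $q$ and factor $q = \ell_x q'$ with $\ell_x$ the linear form vanishing at $x$. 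Blowing up at $x$, the dual ladder identifies $(\Omega_{C'}|_{\P^1})^\vee$ with the fiber product of $\rho$ and multiplication by $\ell_x \from \O_{\P^1}(g) \to \O_{\P^1}(g+1)$; then $v \mapsto (v, q'v)$ embeds $\O_{\P^1}(b+2)$ into $(\Omega_{C'}|_{\P^1})^\vee$. Thus $b'+2 \geq b+2$, which combined with (a) gives $b' = b$ and $\Cliff(C') = a' = a-1$.

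The genuinely non‑formal step — and where the explicit coordinates are really needed — is the description of the blow‑up in the second paragraph: identifying the ideal of $\P^1 \subset C'$ and checking that the extension class transforms by multiplication by a linear form. Once that is in hand, parts (a) and (b) are elementary bookkeeping with line bundles on $\P^1$; the only points requiring a second glance are the degenerate cases $a = b$ (odd genus, maximal Clifford index) and $a = 1$, where the same arguments apply unchanged.
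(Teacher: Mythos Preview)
The paper does not prove this proposition; it simply cites \cite[Corollary~2.5]{bay.eis:95}, where the result is obtained via Bayer and Eisenbud's theory of generalized linear series on ribbons. Your argument is correct and self-contained, and it works directly from the definition of Clifford index adopted in the present paper (the splitting type of $\Omega_C|_{\P^1}$), so in that sense it is even better adapted to this context than the cited proof. The crucial input---that the conormal sequence of $C'$ is the pushout of that of $C$ along $\O_{\P^1}(-g-1)\hookrightarrow\O_{\P^1}(-g)$---follows from functoriality of the conormal sequence together with the local description $\epsilon\mapsto s\epsilon'$ of the blow-up, as you indicate; and once that is in place, (a) and (b) are indeed routine sub-line-bundle arguments on $\P^1$. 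One small remark: the phrase ``multiplies the class $F$ by a linear form vanishing at $x$'' is chart-dependent (for $x=\{s=0\}$ one finds $F'=F$ in the $t$-chart, reduced modulo a smaller subspace), but you never actually use this formulation---only the pushout/pullback description, which is what matters and is correct.
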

The usual Clifford index of smooth curves and the Clifford index for ribbons obey semi-continuity.
\begin{theorem}[See {\cite[Theorem~2.1]{eis.gre:95}}]
  \label{thm:sc}
  Let $C \to \Delta$ be a proper flat family over a DVR $\Delta$ where the geometric general fiber $C_{\overline \eta}$ is smooth and the special fiber $C_0$ is a ribbon.
  Then $\Cliff(C_0) \leq \Cliff(C_{\overline \eta})$.
\end{theorem}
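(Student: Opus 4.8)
First, some reductions. Since the Clifford index of any curve or ribbon of arithmetic genus $g$ is at most $\lfloor (g-1)/2 \rfloor$, the statement is trivial unless $c := \Cliff(C_0) < \lfloor (g-1)/2 \rfloor$, which I henceforth assume. The plan is then to spread the linear series computing $\Cliff(C_0)$ out over $\Delta$ and restrict it to the geometric general fibre.

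By the work of Bayer and Eisenbud on the Clifford index of ribbons --- which, like the usual Clifford index, can also be described via line bundles on $C_0$ with prescribed cohomology, in a way compatible with \autoref{thm:blow-up-clifford} --- the ribbon $C_0$ carries a line bundle $A_0$ of degree $c + 2$ with $h^0(A_0) = 2$; by Riemann--Roch, $h^1(A_0) = g - c - 1$, which is at least $2$ in the range under consideration. Write $f \from \mathcal{C} \to \Delta$. After replacing $\Delta$ by a finite extension and $\mathcal{C}$ by a regular model --- neither of which alters the conclusion --- the obstruction to extending $A_0$ to a line bundle on $\mathcal{C}$ lies in $H^2(C_0, \O_{C_0}) = 0$, so $A_0 = \mathcal{A}|_{C_0}$ for some $\mathcal{A} \in \Pic(\mathcal{C})$. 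Then $A_{\overline\eta} := \mathcal{A}|_{C_{\overline\eta}}$ is a line bundle of degree $c + 2$ on the smooth curve $C_{\overline\eta}$, and provided $h^0(A_{\overline\eta}) \geq 2$ and $h^1(A_{\overline\eta}) \geq 2$, we get $\Cliff(C_{\overline\eta}) \leq (c+2) - 2 = c$, as desired.

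The crux, and the one point where the hypothesis that the central fibre is a ribbon really enters, is this last cohomological claim. Upper semicontinuity only gives $h^0(A_{\overline\eta}) \leq h^0(A_0) = 2$, which is the wrong inequality; and since $h^0 - h^1$ is constant along the family by Riemann--Roch, any drop in $h^0$ on the general fibre is matched by an equal drop in $h^1$, so a naive comparison is useless. One must show that no such drop occurs --- equivalently, that $R^1 f_* \mathcal{A}$ is torsion-free, or that the relative locus of degree-$(c+2)$ line bundles carrying a pencil of sections dominates $\Delta$ through $A_0$. This is the content of Eisenbud and Green's argument: the linear series computing the Clifford index of a ribbon is rigid enough to propagate to an arbitrary deformation. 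The underlying reason is a vanishing theorem for the relevant obstruction group, which holds because a ribbon is assembled from the rational curve $\P^1$ together with a rank-one sheaf on it, forcing the pertinent higher cohomology to vanish.

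Equivalently, and more geometrically, the pencil $A_0$ sweeps a $(c+1)$-dimensional rational normal scroll $X_0$ through the canonically embedded ribbon $C_0 \subset \P^{g-1}$ (when $C_0$ is non-hyperelliptic; for $c = 0$ the scroll degenerates to the rational normal curve). Spreading the canonical models out via the relative dualizing sheaf to a family $\mathcal{C} \subseteq \P^{g-1}_\Delta$, one would lift the given deformation of $C_0 \subset \P^{g-1}$ to a deformation of the pair $(C_0, X_0)$, obtaining a family of scrolls $\mathcal{X} \supseteq \mathcal{C}$ whose ruling cuts out the required pencil of degree $c + 2$ on $C_{\overline\eta}$; again the existence of this lift rests on the vanishing of an obstruction group controlled by the ribbon structure. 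In the hyperelliptic case $c = 0$ this lifting is the statement that the retraction $C_0 \to \P^1$, i.e.\ the splitting of \eqref{eqn:cotangent}, extends to a relative double cover $\mathcal{C} \to \P^1 \times \Delta$, so that $C_{\overline\eta}$ is hyperelliptic.
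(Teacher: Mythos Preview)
The paper does not supply its own proof of this statement; it simply records the result and cites \cite[Theorem~2.1]{eis.gre:95}. So there is no in-paper argument to compare against, and your proposal must be judged on its own.

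As written, your proposal is an outline rather than a proof, and the key step is circular. At the decisive point---showing that $h^0(\mathcal{A})$ does not drop on the general fibre---you write ``This is the content of Eisenbud and Green's argument'' and then offer only a heuristic (``a vanishing theorem for the relevant obstruction group \ldots forcing the pertinent higher cohomology to vanish''). But the theorem you are asked to prove \emph{is} Eisenbud and Green's Theorem~2.1, so invoking their argument at the crux is not a proof. The alternative scroll picture in your final paragraph is closer to what actually happens in \cite{eis.gre:95}, but again you only assert that the obstruction to deforming the pair $(C_0,X_0)$ vanishes without identifying the obstruction space or checking the vanishing.

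There are also two technical soft spots earlier in the sketch. First, the assertion that a ribbon of Clifford index $c$ carries an honest \emph{line bundle} $A_0$ of degree $c+2$ with $h^0=2$ is not what Bayer--Eisenbud prove: their generalized linear series on ribbons are encoded by sequences of blow-ups as in \autoref{thm:blow-up-clifford} (equivalently, by certain torsion-free rank-one sheaves), and it is not automatic that the resulting sheaf on $C_0$ is invertible. Second, ``replacing $\mathcal{C}$ by a regular model'' is dangerous here: any modification of the total space is liable to alter the special fibre, so you may no longer have the ribbon $C_0$ as the central fibre, and the restriction map $\Pic(\mathcal{C})\to\Pic(C_0)$ you need loses its meaning. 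If you want to pursue the line-bundle/spreading approach, both of these points need to be handled carefully before you even reach the semicontinuity question.
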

All ribbons of a given genus and Clifford index are related.
\begin{theorem}[See {\cite[\S~8]{bay.eis:95}}]
  \label{thm:carpet}
  Let $c \geq 1$.
  There exists a surface $X_{g,c} \subset \P^g$ such that all canonically embedded ribbons of  genus $g$ and Clifford index $c$ are hyperplane sections of $X_{g,c}$.
  In particular, the graded betti numbers $\dim K_{p,q}(C, \omega_C)$ depend only on the genus $g$ and the Clifford index $c$ of $C$.
\end{theorem}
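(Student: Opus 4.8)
\textbf{Proof proposal for \autoref{thm:carpet}.}

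The plan is to produce the surface $X_{g,c}$ as a \emph{rational normal scroll}, or more precisely as a double structure on a scroll, and to realize every canonical ribbon of genus $g$ and Clifford index $c$ as a hyperplane section of it. Recall from the discussion around \eqref{eqn:cotangent} that a ribbon $C$ of genus $g$ and Clifford index $c = a$ has $\Omega_C|_{\P^1} \cong \O_{\P^1}(-a-2)\oplus\O_{\P^1}(-b-2)$ with $a+b = g-1$; the canonical bundle $\omega_C$ restricts on $\P^1$ to $\O_{\P^1}(g-1)$ and the canonical map embeds $C$ in $\P^{g-1}$. First I would analyze the image $\P^1 \subset \P^{g-1}$: the conormal sequence shows the ambient projective space decomposes according to the splitting type, so that the canonical $\P^1$ lies on a $2$-dimensional rational normal scroll $S = S(a+1, b+1) \subset \P^{g-1}$ swept out by the lines joining corresponding points of a rational normal curve of degree $a+1$ and one of degree $b+1$. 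The ribbon $C$ is then an effective Cartier divisor (a ``carpet'') supported on the ruling line, and one checks that $\omega_C = \O_C(1)$ forces $C$ to lie in the linear system $|2H - (g-3)R|$ on $S$, where $H$ is the hyperplane and $R$ the ruling. The key computation is that $H^0(S, 2H - (g-3)R)$ has dimension exactly $g+1$ and that its sections cut out, besides the honest conics, precisely the ribbons of Clifford index $c$; moreover the generic such section, viewed inside all of $\P^{g-1}$, extends to a section over a $3$-fold scroll (a cone or a $3$-dimensional scroll $Y_{g,c}$), and $X_{g,c}$ is the corresponding double structure on that $3$-fold.

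Concretely I would set up the following steps. (1) Fix the scroll $S = S(a+1,b+1)$ and show directly from the gluing description \eqref{eqn:gluing} that every ribbon with $\Cliff(C) = a$ canonically embeds with $C_{\red} = \P^1 \subset S$ and $C \in |2H-(g-3)R|$; conversely every element of that linear system that is nonreduced and has the right $I^2=0$ structure is such a ribbon, the parameter $F(t)$ matching the choice of section up to scalar (consistent with the last sentence of \autoref{sec:ribbons} before this theorem). (2) Embed $S$ as a hyperplane section of a $3$-fold scroll $\widetilde S = S(a+1, b+1, 1) \subset \P^{g}$ (adjoin one more $\P^1$ worth of directions), and lift the carpet structure: the divisor class $2\widetilde H - (g-3)\widetilde R$ on $\widetilde S$ restricts to $2H-(g-3)R$ on $S$, and one shows the restriction map on global sections is surjective. (3) Pick a general $D \in |2\widetilde H - (g-3)\widetilde R|$ on $\widetilde S$; this $D$ is a surface, a double structure on a scroll surface inside $\widetilde S$, and I claim $D$ can serve as $X_{g,c}$: a general hyperplane $H \cap D$ is a ribbon of genus $g$ and Clifford index $c$, and conversely every such ribbon arises this way because the $\PGL$-action on $\P^{g-1}$ together with the $\k^*$-action rescaling $F(t)$ acts transitively on the relevant sections of $S$, which all extend. (4) Finally, deduce the Betti number statement: since each canonical ribbon $C$ of type $(g,c)$ is a hyperplane section $X_{g,c} \cap H$, the Koszul cohomology $K_{p,q}(C,\omega_C)$ fits in an exact sequence relating it to $K_{p,q}(X_{g,c})$ and $K_{p-1,q}(X_{g,c})$ (the standard hyperplane-section/Koszul long exact sequence, using that $\omega_C = \O_{X_{g,c}}(1)|_C$), and the outer terms depend only on $X_{g,c}$, hence only on $(g,c)$; a dimension count (or the fact that $X_{g,c}$ is projectively normal with known resolution, being essentially a scroll double structure) shows the connecting maps have rank independent of the particular $C$, so $\dim K_{p,q}(C,\omega_C)$ is constant in the family.

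I expect the main obstacle to be step (3): showing that \emph{all} canonical ribbons of a given $(g,c)$, not just the general one, are hyperplane sections of a single $X_{g,c}$. The family of ribbons with fixed $(g,c)$ is parametrized by the projectivization of $\k[t,t^{-1}]/(\k[t]+t^{-g+1}\k[t^{-1}])$ modulo the reparametrizations of $\P^1$ preserving the scroll, and I must check this quotient is exactly the orbit of hyperplanes meeting $X_{g,c}$ in a ribbon — equivalently that the linear system on $\widetilde S$ separates these, and that no ribbon of Clifford index $c$ is ``missed'' by degenerating into one of lower Clifford index. This is a transversality/base-locus analysis on the scroll $\widetilde S$, and it is where the inequality $c \geq 1$ is used (for $c=0$ the ribbon is split and lives on a cone, a genuinely different geometry). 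A secondary subtlety is making the double (carpet) structure on the $3$-fold scroll well-defined and flat over the hyperplane parameter, so that ``hyperplane section'' is literally a flat family of ribbons; this follows once one writes $X_{g,c}$ locally as $\spec$ of a square-zero extension of the scroll's coordinate ring, mirroring \eqref{eqn:gluing} one dimension up, but it requires care to globalize. Granting these, the Koszul bookkeeping in step (4) is routine.
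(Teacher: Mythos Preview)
The paper does not prove this statement itself but cites \cite[\S~8]{bay.eis:95}, where $X_{g,c}$ is constructed \emph{directly} as a K3 carpet: the unique ribbon on the two-dimensional rational normal scroll $S(c,\,g-1-c)\subset\P^{g}$ whose dualizing sheaf is trivial. A hyperplane $H\subset\P^{g}$ meets the scroll in a rational normal curve of degree $g-1$ in $H\cong\P^{g-1}$, and meets the carpet in a ribbon supported on that curve; since $\omega_{X_{g,c}}\cong\O_{X_{g,c}}$, adjunction gives $\omega_{X_{g,c}\cap H}\cong\O_{X_{g,c}\cap H}(1)$, so the section is canonically embedded, and one checks it has genus $g$ and Clifford index $c$. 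Varying $H$ recovers every such ribbon. The Betti-number statement then follows because the K3 carpet is arithmetically Cohen--Macaulay (in fact arithmetically Gorenstein): the minimal free resolution of its homogeneous coordinate ring restricts to a minimal free resolution of that of any hyperplane section, so no analysis of connecting maps is needed.

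Your route is genuinely different and, as written, breaks at step~(1). The scroll $S(a+1,b+1)$ with $a+b=g-1$ spans $\P^{g+2}$, not $\P^{g-1}$, so the asserted containment is impossible on dimensional grounds. Even after repairing the indices, the claim fails: a non-degenerate surface in $\P^{g-1}$ has degree at least $g-2$, and on a scroll of that degree the rational normal curve of degree $g-1$ lies in $|H+F|$ with self-intersection $(H+F)^{2}=g$, whereas the ribbon's conormal bundle is $\O_{\P^1}(-g-1)$. Thus the canonical ribbon is \emph{not} the divisor $2\cdot(\text{RNC})$ on any smooth rational normal surface scroll in $\P^{g-1}$, and the lift to a three-fold scroll in steps~(2)--(3) has no foundation. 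Ironically, your parenthetical remark in step~(4) that $X_{g,c}$ is ``essentially a scroll double structure'' is exactly the Bayer--Eisenbud construction: one builds the carpet in $\P^{g}$ as a square-zero thickening of the scroll from the outset, rather than trying to cut it out of a larger smooth variety. Likewise, the ACM property you mention in passing is what actually proves the constancy of Betti numbers, replacing your unjustified claim about ranks of connecting maps.
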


We need basic results about the deformation theory of ribbons.
\begin{proposition}
  \label{thm:smooth}
  A ribbon $C$ has a smooth versal deformation space.
\end{proposition}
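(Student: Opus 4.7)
My plan is to show that the obstruction space to deformations of $C$ vanishes, which gives the smoothness of the versal deformation.

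First, I would observe that $C$ is a local complete intersection: on each of the charts $U_1 = \spec \k[s,\epsilon]/(\epsilon^2)$ and $U_2 = \spec \k[t,\eta]/(\eta^2)$, the ribbon $C$ is cut out as a hypersurface in a smooth ambient surface. Hence the cotangent complex $\mathbb{L}_C$ has tor-amplitude in $[-1, 0]$, so $\mathcal{E}xt^i(\mathbb{L}_C, \O_C) = 0$ for $i \geq 2$. Combined with $H^p(C, -) = 0$ for $p \geq 2$ (as $\dim C = 1$), the local-to-global spectral sequence for $\Ext$ collapses to give
\[
\Ext^2(\mathbb{L}_C, \O_C) \cong H^1(C, T^1_C),
\]
where $T^1_C := \mathcal{E}xt^1(\mathbb{L}_C, \O_C)$ is the first cotangent sheaf. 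Since the obstruction to deformations of $C$ lives in $\Ext^2(\mathbb{L}_C, \O_C)$, it suffices to show that $H^1(C, T^1_C) = 0$.

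Next, I would identify $T^1_C$. It is supported on $C_\red = \P^1$ and a direct local computation shows it is a line bundle there. On $U_1$, the map $T_{\A^2}|_C \to \mathcal{H}om(I/I^2, \O_C)$ sends $\partial_\epsilon \mapsto 2\epsilon$ and $\partial_s \mapsto 0$, so its cokernel is $\O_D$ generated by the functional $\phi_1$ with $\phi_1(\epsilon^2) = 1$; similarly on $U_2$ with generator $\phi_2$ dual to $\eta^2$. Applying $\phi_1$ to the relation $\eta^2 = t^{2g+2}\epsilon^2$ (obtained by squaring \eqref{eqn:gluing}) gives $\phi_1 = t^{2g+2}\phi_2$ on the overlap, identifying $T^1_C$ with $\O_{\P^1}(2g+2)$.

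Finally, since $H^1(\P^1, \O(2g+2)) = 0$, we conclude $\Ext^2(\mathbb{L}_C, \O_C) = 0$, so deformations of $C$ are unobstructed and the versal deformation is smooth.

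The main obstacle is the identification of $T^1_C$ with a sufficiently positive line bundle on $\P^1$. For the conclusion only the estimate $\deg T^1_C \geq -1$ is needed, but the explicit degree $2g+2$ falls out naturally from the transition computation.
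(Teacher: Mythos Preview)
Your proof is correct and follows essentially the same approach as the paper's own argument. Both show unobstructedness by using that $C$ is a one-dimensional local complete intersection (so the only relevant obstruction group is $H^1(C, T^1_C)$), then compute $T^1_C \cong \O_{\P^1}(2g+2)$ via a local calculation on each chart and the transition $\eta = t^{g+1}\epsilon$; the only cosmetic difference is that the paper phrases things in terms of the Lichtenbaum--Schlessinger functors and the explicit two-term complex $J/J^2 \to \Omega_S\otimes R$, while you use the equivalent language of the cotangent complex and the local-to-global spectral sequence.
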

\begin{proof}
  For non-hyperelliptic ribbons, this follows from \cite[Theorem~6.1]{bay.eis:95}.
  Here is another proof that works for all ribbons, using deformation theory.
  Denote by $T^i$ the deformation-obstruction functors of Lichtenbaum and Schlessinger for $i = 0, 1, 2$ (see \cite[\S~1.3]{har:10}).
  Let $T^i_C$ be the sheaf $T^i(C/\k, \O_C)$.
  Suppose $A \to A'$ is a surjection of local Artin $\k$-algebras with kernel $\k$.
  Let $C_{A'} \to \spec A'$ be a deformation of $C$.
  The obstructions to lifting $C_{A'} \to \spec A'$ to a deformation $C_A \to \spec A$ lie successively in $H^0(C, T^2_C)$, $H^1(C, T^1_C)$, and $H^2(C, T^0_C)$.
  Since $C$ has dimension 1, we have $H^2(C, T^0_C) = 0$.
  Since $C$ is a local complete intersection, we have $T^2_C = 0$.
  To compute $T^1_C$, we do a simple local computation.
  Consider $U = \spec \k[s, \epsilon]/\epsilon^2$.
  Set $S = \k[s, \epsilon]$, $J = \epsilon^2S$, and $R = S/J$.
  The cotangent complex for $U/\k$ is given by
  \[ L_{\bullet}: 0 \to J/J^2 \xrightarrow{d} \Omega_{S/\k} \otimes_S R.\]
  By definition, $T^1_{U} = H^1(\Hom_{R}(L_{\bullet}, R))$.
  Let $e$ be a generator of the free $R$ module $J/J^2$.
  Note that $\Omega_{S/\k} \otimes_SR  = R\langle ds, d\epsilon\rangle$ and $d(e) = \epsilon d \epsilon$.
  Denote by $\partial_s$, $\partial_\epsilon$, and $e^*$ the dual generators of $ds$, $d\epsilon$, and $e$, respectively.
  Then
  \[ T^1_{U} = \coker(d^* \from R \langle \partial_s, \partial_\epsilon \rangle \to R \langle e^* \rangle),\]
  where $d^* (\partial_s) = 0$ and $d^*(\partial_\epsilon) = \epsilon e^*$.
  Therefore, we get $T^1_U = R/\epsilon \langle e^* \rangle = \k[s]\langle e^* \rangle$.
  By a similar computation on $V = \spec \k[t,\eta]/\eta^2$ and gluing, we get $T^1_C \cong \O_{\P^1}(2g+2)$.
  Therefore, $H^1(C, T^1_C) = 0$ and hence deformations of $C$ are unobstructed.  
\end{proof}

\begin{proposition}\label{thm:codim2}
  Let $(U, 0)$ be a versal deformation space of $C$ and $C_U \to U$ a versal family.
  Let $N \subset U$ be the closed subset consisting of $u \in U$ where $C_u$ has worse than nodal singularities.
  Then the codimension of $N$ in $U$ at $0$ is at least 2.
\end{proposition}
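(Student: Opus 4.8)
The plan is to reduce this to a local analysis of the singularities of $C_U$ along the central fibre, followed by a dimension count on an incidence variety over $\P^1$. First I would put $C_U$ in a local normal form. Cover $C_0=C$ by the two standard charts $U_1=\spec\k[s,\epsilon]/(\epsilon^2)$ and $U_2=\spec\k[t,\eta]/(\eta^2)$ of \eqref{eqn:gluing}, and spread them out to opens $\mathcal U_1,\mathcal U_2\subset C_U$, flat over $U$, meeting $C_0$ in $U_1,U_2$. Working along $U_1$ over the completed local ring $R$ of $U$ at $0$ (regular, by \autoref{thm:smooth}) and lifting the generators $s,\epsilon$ and the defining equation, one finds $\mathcal U_1=\spec R[[s,\epsilon]]/(h)$ with $h\equiv\epsilon^2\pmod{\mathfrak m_R}$. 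By Weierstrass preparation I may take $h=\epsilon^2+a_1(s)\epsilon+a_0(s)$ with $a_i\in\mathfrak m_R[[s]]$, and completing the square (here $\operatorname{char}\k=0$ is used) brings it to $\epsilon^2=\gamma(s)$ with $\gamma\in\mathfrak m_R[[s]]$; likewise on $\mathcal U_2$, and matching along the overlap via \eqref{eqn:gluing} shows that the two functions glue to a section of $\O_{\P^1}(2g+2)$ over $U$ that vanishes on $C_0$. A direct inspection of the germ $\k[[s,\epsilon]]/(\epsilon^2-\gamma_u(s))$ then shows that $C_u$ is smooth at the point over $s_0$ when $\gamma_u$ vanishes to order at most $1$ there, has a node when it vanishes to order exactly $2$, and has a worse singularity --- an $A_{k-1}$ singularity for $k\geq3$, or a non-reduced point if $\gamma_u\equiv0$ --- when it vanishes to order at least $3$. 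Writing $N_x\subset U$ for the locus of $u$ such that $C_u$ is worse than nodal at a point over $x\in\P^1$, properness of $C_U\to U$ gives $N=\bigcup_{x\in\P^1}N_x$ near $0$, and $N_x$ is cut out there by the $2$-jet of $\gamma$ at $x$, i.e.\ by three equations.

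The key point is that these three equations are independent at $0$: the resulting map $U\to\A^3$ (the space of $2$-jets of $\O_{\P^1}(2g+2)$ at $x$) is smooth at $0$. Its differential at $0$ factors as the canonical surjection $T_0U\to H^0(C,T^1_C)$ --- surjective because $H^2(C,T^0_C)=0$ for dimension reasons --- followed by evaluation of the $2$-jet at $x$ on $H^0(C,T^1_C)\cong H^0(\P^1,\O_{\P^1}(2g+2))$; the identification of $T^1_C$ with $\O_{\P^1}(2g+2)$, and the fact that this composite really is the differential of $u\mapsto\gamma_u$, are read off from the explicit description of $T^1_C$ in the proof of \autoref{thm:smooth}. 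The evaluation map is surjective since $H^1(\P^1,\O_{\P^1}(2g+2)(-3x))=H^1(\P^1,\O_{\P^1}(2g-1))=0$. Hence each $N_x$ has codimension $3$ in $U$ near $0$. Finally, the incidence variety $\mathcal I=\{(x,u)\in\P^1\times U:C_u\text{ is worse than nodal at a point over }x\}$ maps onto $\P^1$ with every fibre of codimension $3$ in $U$ near $0$, so $\dim\mathcal I\leq\dim U-2$ near the fibre over $0$; since $\mathcal I$ surjects onto $N$ near $0$, we conclude that $\codim_0N\geq2$.

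The main obstacle is the bookkeeping in the first paragraph: obtaining the normal form $\epsilon^2=\gamma(s)$ uniformly across the family and --- since in general there is no morphism $C_U\to\P^1$ --- carrying out the comparison chart by chart while tracking the transition function \eqref{eqn:gluing} and its deformation, together with the verification that the linearization of $u\mapsto\gamma_u$ is the canonical map to $H^0(C,T^1_C)$. The remaining ingredients are the elementary classification of the plane-curve germs $\epsilon^2=\gamma(s)$ and the standard bound on the dimension of the image of an incidence variety.
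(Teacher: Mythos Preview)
Your argument is essentially correct and is a genuinely different proof from the paper's. The paper does not work locally on $C$ at all. Instead it first observes that every ribbon degenerates isotrivially to the hyperelliptic ribbon (scale the gluing datum $F(t)$ to zero), so by semi-continuity of local dimension it suffices to treat that one case. For the hyperelliptic ribbon it then writes down the explicit test family $y^2=a_{2g+2}x^{2g+2}+\cdots+a_0$ over $T=\A^{2g+3}$; the worse-than-nodal locus in $T$ is the set of polynomials with a root of multiplicity $\geq 3$, visibly of codimension $2$, and since $U$ is smooth a Cartier-divisor argument lifts this bound from $T$ to $U$.

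By contrast, you compute the codimension directly for an arbitrary ribbon via the local normal form and the surjection $T_0U\twoheadrightarrow H^0(T^1_C)\twoheadrightarrow (\text{2-jets at }x)$, then assemble the pointwise codimension-$3$ loci into an incidence correspondence. What you gain is uniformity (no reduction step) and a transparent link to the versal deformation of the $A_\infty$-singularity; what the paper gains is brevity, since the hyperelliptic family already \emph{is} the normal form globally and no Weierstrass preparation, chart-matching, or jet-bundle argument is needed. One small point: your claim that the $\gamma$'s glue to a global section of $\O_{\P^1}(2g+2)$ over all of $U$ is stronger than you need and is not obviously true beyond first order (the transition involves the deformation of \eqref{eqn:gluing}); but your actual argument only uses the linearization, which lands in $H^0(T^1_C)$ by construction, so this does not affect the proof. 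The cleanest way to organize the endgame is to work with the relative worse-than-nodal locus $\Sigma'\subset C_U$ and bound $\dim_p\Sigma'$ for each $p\in C_0$, rather than trying to define $N_x$ through a non-existent global map to $\P^1$.
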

Said differently, $N \subset U$ is the complement of the open set of $u \in U$ over which $C_u$ is a semi-stable curve.
\begin{proof}
  Note that all ribbons degenerate isotrivially to the hyperelliptic ribbon.
  Indeed, in the gluing description \eqref{eqn:gluing}, we may replace $F(t)$ by $cF(t)$ and take $c$ to $0$.
  Therefore, it suffices to prove the proposition for the hyperelliptic ribbon.
  Furthermore, since $(U, 0)$ is irreducible, it suffices to exhibit an irreducible pointed scheme $(T,0)$ and a map $\phi \from (T,0) \to (U,0)$ such that the codimension of $\phi^{-1}(N)$ in $T$ at $0$ is at least 2.
  Said differently, it suffices to construct a family of curves $C_T \to T$ such that $C_0$ is a hyperelliptic ribbon and the locus of $t \in T$ such that $C_t$ has worse than nodal singularities has codimension at least 2.
  Take $T = \A^{2g+3} = \A \langle a_0, \dots, a_{2g+2} \rangle$ with $0 = (0,\dots, 0)$ and let $C_T \to T$ be the family of hyperelliptic curves defined affine locally by
  \[ y^2 = a_{2g+2} x^{2g+2} + \dots + a_1 x + a_0.\]
  The locus of worse than nodal curves corresponds to the set of $(a_0, \dots, a_{2g+2})$ for which the polynomial $a_{2g+2} x^{2g+2} + \dots + a_1 x + a_0$ has a zero of multiplicity at least 3.
  It is easy to see that this locus has codimension 2.
\end{proof}

\section{Proof for ribbons of odd genus and maximum Clifford index}\label{sec:generic}
Let $X$ be a projective scheme and $L$ an invertible sheaf on $X$.
Set $V = H^0(X, L)$.
Recall that the Koszul cohomology group $K_{p,q}(X, L)$ is defined as the middle cohomology group in the sequence
\[\wedge^{p+1}V \otimes H^0(X, L^{q-1}) \to \wedge^p V \otimes H^0(X, L^q) \to \wedge^{p-1} V \otimes H^0(X, L^{q+1}),\]
where the map is given by
\[ v_1 \wedge \dots \wedge v_p \otimes w \mapsto \sum_{i=1}^p (-1)^i v_1 \wedge \dots \wedge \widehat{v_i} \wedge \dots \wedge v_p \otimes v_i w.\]

Let $g = 2k+1$ with $k \geq 1$.
Let $\stack U$ be the (non-separated) stack of all curves $C$ that satisfy the following conditions:
\begin{enumerate}
\item $C$ is Gorenstein of arithmetic genus $g$ and $h^0(C, \O_C) = 1$.
\item $\omega_C$ is very ample and embeds $C$ as an arithmetically Gorenstein subscheme in $\P^{g-1}$,
\item a versal deformation space of $C$ is smooth.
\end{enumerate}
All three are open conditions and hence $\stack U$ is an open substack of the stack of all curves constructed, for example, in \cite{hal:10}.
The first two conditions are more important than the third; the third is there just to avoid any problems about divisor theory.
Denote by $\stack U^{\rm nodal} \subset \stack U$ the open substack parametrizing curves with at worst nodes as singularities.
Then $\stack U^{\rm nodal} \subset \overline{\orb M}_g$.
In fact, $\stack U^{\rm nodal}$ is precisely $\overline{\orb M}_g^{\rm va}$, the stack of stable curves with a very ample dualizing sheaf; it is studied in \cite{apr:05}.
Note that ${\orb M}_g \cap \overline{\orb M_g}^{\rm va}$ is the complement in ${\orb M}_g$ of the hyperelliptic locus; we denote it by ${\orb M}_g^{\rm nh}$.
Also note that all non-hyperelliptic ribbons satisfy the three conditions---the first is clear; the second is \cite[Theorem~5.3]{bay.eis:95}; and the third is \autoref{thm:smooth}.

Let $\pi \from {\stack C} \to {\stack U}$ be the universal curve.
Set ${\stack V} = \pi_*\left(\omega_\pi\right)$.
Then $\stack V$ is locally free of rank $g$ on $\stack U$.
Consider the Koszul complex
\begin{equation*}\label{eqn:koszul}
  \begin{split}
    K_\bullet: \wedge^{k+1}{\stack V} \xrightarrow{\delta_1} \wedge^{k}{\stack V} \otimes {\stack V} \xrightarrow{\delta_2} \wedge^{k-1}{\stack V} \otimes \pi_*\left(\omega_\pi^2\right) \xrightarrow{\delta_3} \wedge^{k-2}{\stack V} \otimes \pi_*\left(\omega_\pi^3\right) \xrightarrow{\delta_4} \dots \\
    \dots \to
    \wedge^{k+1-p}{\stack V} \otimes \pi_*\left(\omega_\pi^{p}\right) \to
      \dots \to
      \wedge^{0}{\stack V} \otimes \pi_*\left(\omega_\pi^{k+1}\right).
\end{split}
\end{equation*}
Define $\stack E$ and $\stack F$ by 
\begin{align*}
  \stack E &= \coker \delta_1 \\
  \stack F &= \ker \delta_3.
\end{align*}
\begin{proposition}
  $\stack E$ and $\stack F$ are locally free on $\stack U$ of the same rank.
\end{proposition}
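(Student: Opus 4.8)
The plan is to realise $\stack E$ as a cokernel and $\stack F$ as a subbundle extracted from $K_\bullet$, which I will show is a complex of vector bundles that is exact on each fiber away from its two middle terms; the equality of ranks will then follow from an Euler-characteristic computation together with Gorenstein--Koszul duality.

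Write $T_j=\wedge^{k+1-j}\stack V\otimes\pi_*\omega_\pi^{j}$ for the $j$-th term of $K_\bullet$ ($0\le j\le k+1$, with $\pi_*\omega_\pi^{0}=\O_{\stack U}$). First I would check that each $\pi_*\omega_\pi^{j}$ is locally free and commutes with base change, with fiber $H^0(C_u,\omega_{C_u}^{j})$: for $j\le 1$ this is given, and for $j\ge 2$ it follows from $H^1(C_u,\omega_{C_u}^{j})=0$, which holds because Serre duality on the Gorenstein curve $C_u$ together with the projective normality from condition~(2) gives $H^1(C_u,\omega_{C_u}^{j})\cong H^0(C_u,\O_{C_u}(1-j))^{\vee}=0$. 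Hence $K_\bullet$ is a complex of locally free sheaves whose formation commutes with base change, and $K_\bullet\otimes\k(u)$ is the Koszul complex with cohomology $K_{k+1-j,\,j}(C_u,\omega_{C_u})$ at $T_j$. The differential $\delta_1$ is the Koszul comultiplication $\wedge^{k+1}\stack V\to\wedge^{k}\stack V\otimes\stack V$; composing it with the wedge product $\wedge^{k}\stack V\otimes\stack V\to\wedge^{k+1}\stack V$ yields $\pm(k+1)\cdot\id$, which is invertible since $\operatorname{char}\k=0$, so $\delta_1$ is a split injection of bundles and $\stack E=\coker\delta_1$ is locally free of rank $\rk T_1-\rk T_0$.

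For $\stack F$ I would use that $C_u$, being canonically embedded and arithmetically Gorenstein, has $K_{p,q}(C_u,\omega_{C_u})=0$ for $q\ge 4$, for $q=3$ with $p\ne g-2$, and for $q=0$ with $p\ne 0$ (standard facts on minimal free resolutions of arithmetically Gorenstein curves; this is where conditions~(1)--(2) enter, and they hold for ribbons by \cite[Theorem~5.3]{bay.eis:95}). In the range $0\le j\le k+1$ these force $K_\bullet\otimes\k(u)$ to be exact at every $T_j$ with $j\notin\{1,2\}$. I would then propagate local freeness inward from the right end: the last differential $T_k\to T_{k+1}$ is surjective on each fiber, hence surjective, so its kernel is a subbundle whose formation commutes with base change; exactness one step to the left then forces the preceding differential to surject onto that subbundle, so its kernel is again a subbundle; iterating down to $T_3$ and using exactness at $T_3$ shows $\delta_3$ surjects $T_2$ onto $\ker(T_3\to T_4)$, whence $\stack F=\ker\delta_3$ is a subbundle of $T_2$, locally free, of rank $\sum_{j=2}^{k+1}(-1)^j\rk T_j$ by telescoping. (The cases $k=1$, where $\stack F=\pi_*\omega_\pi^{2}$, and $k=2$, where $\delta_3$ is already surjective, are easier.) Finally, $\rk\stack E=\rk\stack F$ amounts to $\sum_{j=0}^{k+1}(-1)^j\rk T_j=0$; since $K_\bullet\otimes\k(u)$ is a finite complex of vector spaces this alternating sum equals $\sum_j(-1)^j\dim H^j(K_\bullet\otimes\k(u))=\dim K_{k-1,2}(C_u,\omega_{C_u})-\dim K_{k,1}(C_u,\omega_{C_u})$, and this vanishes because Gorenstein--Koszul duality identifies $K_{k-1,2}(C_u,\omega_{C_u})^{\vee}$ with $K_{g-1-k,1}(C_u,\omega_{C_u})=K_{k,1}(C_u,\omega_{C_u})$, using $g=2k+1$.

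The substance of the argument is imported, so the proof is largely bookkeeping; the step requiring the most care is the inward propagation of local freeness, where at each stage one must know that the relevant differential has locally constant rank. This rests on the fiberwise exactness above and on $\pi_*\omega_\pi^{j}$ commuting with base change, so that the fiber of the kernel of a map of bundles is the kernel of the induced map on fibers.
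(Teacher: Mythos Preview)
Your argument is correct and follows essentially the same route as the paper: both show $\stack F$ is locally free by establishing fiberwise exactness of $K_\bullet$ from position $3$ onward (using the Gorenstein duality on $K_{p,q}$) and then propagating local freeness back from the right; you simply supply more of the bookkeeping (base change for $\pi_*\omega_\pi^j$, the explicit retract showing $\delta_1$ splits) that the paper leaves implicit. The only substantive difference is in the rank equality: the paper just asserts the common value $\tfrac{2\cdot g!}{(k-1)!(k+1)!}$, whereas you deduce $\rk\stack E=\rk\stack F$ from the vanishing of the Euler characteristic of $K_\bullet\otimes\k(u)$ together with the duality $K_{k-1,2}\cong K_{k,1}^{*}$, which is a cleaner way to see it and avoids the explicit binomial computation.
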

\begin{proof}
  $\stack E$ is clearly locally free.
  Consider a point $[C] \in \stack U$.
  Since the homogeneous coordinate ring of the canonical embedding of $C$ is Gorenstein, we have the duality
  \[K_{p,q}(C, \omega_C) \cong K_{g-2-p,3-q}(C, \omega_C)^*.\]
  In particular, we have $K_{p,q}(C, \omega_C) = 0$ for $p \geq 4$ and for $p = 3$ and $q < g-2$.
  Therefore, the complex $K_\bullet$ is exact from the third place onward: $\ker({\delta_{q+1}})=\im(\delta_q)$ for all $q \geq 3$.
  Since $K_\bullet$ is a finite complex of locally free sheaves, it follows that $\stack F = \ker \delta_3$ is locally free.

  It is easy to see that the rank of both $\stack E$ and $\stack F$ is $\frac{2 \cdot g!}{(k-1)!(k+1)!}$.
\end{proof}

The map $\delta_2$ in $K_\bullet$ induces a map $\delta \from \stack E \to \stack F$.
Observe that
\begin{equation}\label{eqn:D}
  \coker \delta|_{[C]} = K_{k-1,2}(C, \omega_C).
\end{equation}

Let $D \subset \stack U$ be defined by the vanishing of $\det(\delta)$.

Let $D_{k+1} \subset \stack U$ be the closure of the locus of $(k+1)$-gonal curves in ${\orb M}^{\rm nh}_g$.

\begin{proposition}\label{prop:equality_of_divisors}
  Let $C$ be a non-hyperelliptic ribbon of odd genus $2k+1$.
  We have the equality $D_{k+1} = D$ in an open subset containing $[C] \in \stack U$.
\end{proposition}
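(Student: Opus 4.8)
The plan is to compare $D$ with $D_{k+1}$ first over the open substack $\stack U^{\rm nodal}$, where Green's conjecture for curves of genus $2k+1$ is available, and then to transport the comparison to a neighbourhood of $[C]$ by exploiting that $D$ is cut out by a single equation on the smooth stack $\stack U$ while the complement of $\stack U^{\rm nodal}$ is small. The inclusion $D_{k+1} \subseteq D$ holds on all of $\stack U$: by \eqref{eqn:D}, $D$ is the closed locus of curves with $K_{k-1,2}(C,\omega_C) \neq 0$, and any smooth $(k+1)$-gonal curve has Clifford index at most $k-1$, so it lies in $D$ by the (unconditional) non-vanishing half of Green's conjecture; as $D$ is closed, it contains the closure $D_{k+1}$ of the $(k+1)$-gonal locus of $\orb M_g^{\rm nh}$.

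Next I would establish the equality $D \cap \stack U^{\rm nodal} = D_{k+1} \cap \stack U^{\rm nodal}$. For smooth curves this is the combination of the theorems of Voisin and Hirschowitz--Ramanan. Voisin's theorem for general curves of odd genus \cite{voi:05} shows that $K_{k-1,2}$ vanishes on a nonempty open subset of $\orb M_g$, so the locus $D \cap \orb M_g$---the zero scheme of the section $\det\delta$ of a line bundle---is a proper effective divisor; the class computation of Hirschowitz--Ramanan \cite{hir.ram:98} then forces this divisor to be supported on the irreducible $(k+1)$-gonal divisor, which by the previous paragraph it contains, so $D \cap \orb M_g^{\rm nh} = D_{k+1} \cap \orb M_g^{\rm nh}$. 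The same identification over the nodal, non-smooth curves of $\stack U^{\rm nodal} = \overline{\orb M}_g^{\rm va}$ is furnished by Aprodu's study of that space \cite{apr:05}, which again identifies the non-vanishing locus of $K_{k-1,2}$ with the gonality locus.

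Finally I would transport this to $[C]$. By \autoref{thm:codim2} the worse-than-nodal locus $\stack U \setminus \stack U^{\rm nodal}$ has codimension at least $2$ in $\stack U$ at $[C]$, so $\stack U^{\rm nodal}$ is dense in a neighbourhood of $[C]$; and since $\stack U$ is smooth (\autoref{thm:smooth}) while $D$, as the zero locus of $\det\delta$, is locally principal, every irreducible component of $D$ has codimension at most $1$ near $[C]$ and therefore cannot be contained in $\stack U \setminus \stack U^{\rm nodal}$. Hence each component of $D$ meets the open set $\stack U^{\rm nodal}$ densely, and $D$ agrees, in a neighbourhood of $[C]$, with the closure of $D \cap \stack U^{\rm nodal}$; the same is true of $D_{k+1}$, which by definition is the closure of a locus contained in $\orb M_g^{\rm nh} \subseteq \stack U^{\rm nodal}$. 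Combined with the previous step, this yields $D = D_{k+1}$ near $[C]$.

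The hard part is the middle step. On the smooth locus it is precisely the input of Voisin and Hirschowitz--Ramanan; the delicate point is that $[C]$ is reached only through the codimension-one boundary of nodal curves in $\stack U$---which is why one works in the versal family rather than inside $\orb M_g^{\rm nh}$---and \emph{a priori} $\det\delta$ could vanish identically along a boundary divisor, producing a component of $D$ not seen by the easy inclusion $D_{k+1}\subseteq D$. Excluding this is exactly Green's conjecture for the general nodal curve of genus $2k+1$, for which the analysis of $\overline{\orb M}_g^{\rm va}$ in \cite{apr:05} is indispensable; \autoref{thm:codim2} then disposes of the still more degenerate curves, which are too few to carry a divisorial component of $D$.
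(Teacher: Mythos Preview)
Your argument follows essentially the same route as the paper's: establish $D=D_{k+1}$ on $\orb M_g^{\rm nh}$ via Voisin and Hirschowitz--Ramanan, push the equality across the nodal boundary, and then use \autoref{thm:codim2} to reach $[C]$. The one inaccuracy is in the middle step. You invoke \cite{apr:05} to say that on $\stack U^{\rm nodal}\setminus\orb M_g^{\rm nh}$ the non-vanishing locus of $K_{k-1,2}$ coincides with the gonality locus, but that overstates what Aprodu proves---such an identification on the whole boundary would amount to Green's conjecture for every nodal curve. The paper instead notes that $\stack U^{\rm nodal}\setminus\orb M_g^{\rm nh}$ has a single divisorial component $\Delta_0$, and cites Voisin's result that $K_{k-1,2}$ vanishes for every curve in a linear series on a K3 surface (which includes irreducible one-nodal curves) to conclude that $D$ does not contain $\Delta_0$; since $D_{k+1}$ does not either, the equality on $\orb M_g^{\rm nh}$ extends to all of $\stack U^{\rm nodal}$. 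Your final paragraph correctly identifies that all one needs is vanishing for the \emph{general} nodal curve; the right reference for that is \cite{voi:05} directly, not \cite{apr:05}.
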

\begin{proof}
  The results of Voisin \cite{voi:05} and Hirschowitz--Ramanan \cite{hir.ram:98} give $D_{k+1} = D$ on ${\orb M}^{\rm nh}_g$.
  Consider the open inclusion ${\orb M}^{\rm nh}_g \subset \stack U^{\rm nodal}$.
  Note that $\stack U^{\rm nodal} \setminus {\orb M}^{\rm nh}_g$ has one divisorial component $\Delta_0$ whose general fiber corresponds to an irreducible nodal curve.
  In \cite{voi:05}, Voisin shows that $K_{k-1,2}(C, \omega_C) = 0$ for every $C$ in a linear series on a K3 surface, which includes irreducible nodal curves.
  Therefore $D$ does not contain $\Delta_0$ as a component.
  Since $D_{k+1}$ is the closure of a divisor on ${\orb M}^{\rm nh}_g$, it does not contain $\Delta_0$ as a component either.
  Therefore, the equality of divisors $D = D_{k+1}$ holds in codimension 1 on ${\stack U}^{\rm nodal}$ and hence on all of ${\stack U}^{\rm nodal}$.
  The same reasoning and \autoref{thm:codim2} implies that $D = D_{k+1}$ holds around every point in $\stack U$ corresponding to a ribbon.
\end{proof}

\begin{corollary}\label{thm:generic}
  For a ribbon $C$ of genus $2k+1$ with the maximum Clifford index $k$, we have $K_{k-1,2}(C, \omega_C) = 0$.
\end{corollary}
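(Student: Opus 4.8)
The plan is to deduce this immediately from the preceding proposition. Since $\Cliff(C) = k \geq 1$, the ribbon $C$ is not hyperelliptic, so that proposition provides an open neighborhood of $[C]$ in $\stack U$ on which the divisors $D$ and $D_{k+1}$ coincide. By \eqref{eqn:D}, the group $K_{k-1,2}(C, \omega_C)$ is the cokernel of $\delta$ at $[C]$, so it vanishes exactly when $[C] \notin D$. Thus the whole task is to show that $[C]$ does not lie on $D_{k+1}$, the closure in $\stack U$ of the locus of smooth $(k+1)$-gonal curves in $\orb M_g^{\rm nh}$.

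I would argue this by contradiction. Suppose $[C] \in D_{k+1}$. Since $C$ is a non-reduced ribbon, it is not itself a smooth $(k+1)$-gonal curve, so $[C]$ is a genuine limit of such curves; passing to a DVR $\Delta$, we obtain a proper flat family $\mathcal C_\Delta \to \Delta$ with special fiber $C$ and smooth $(k+1)$-gonal generic fiber. As $D_{k+1}$ has codimension $1$ in $\stack U$ while, by \autoref{thm:codim2}, the worse-than-nodal locus has codimension at least $2$ near $[C]$, we may take every fiber of this family other than $C$ to have at worst nodal singularities. Taking the limit of the degree-$(k+1)$ pencils on the generic fibers---after a finite base change, and a controlled modification of the total space if necessary---yields a line bundle $L$ on $C$ with $\deg L = k+1$ and $h^0(C, L) \geq 2$. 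By Riemann--Roch and Serre duality on the Gorenstein curve $C$ of arithmetic genus $g = 2k+1$,
\[ h^0(C, \omega_C \otimes L^{-1}) = h^1(C, L) = h^0(C, L) - \deg L - 1 + g = h^0(C, L) + k - 1 \geq 2,\]
so $L$ is a linear series of degree $k+1$ and projective dimension at least $1$ with $h^1 \geq 2$. By the description of the Clifford index of a ribbon in terms of linear series in \cite{bay.eis:95} (compare \autoref{thm:blow-up-clifford}), this gives $\Cliff(C) \leq \deg L - 2(h^0(C,L) - 1) \leq k-1$, contradicting $\Cliff(C) = k$. Hence $[C] \notin D_{k+1} = D$, and therefore $K_{k-1,2}(C, \omega_C) = 0$.

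The step I expect to be the main obstacle is the passage to the limiting pencil $L$ on $C$: because $C$ is non-reduced, one must exclude the possibility that the pencils on the nearby smooth fibers degenerate to something supported on an auxiliary semistable model rather than to a line bundle (or at least a torsion-free sheaf still witnessing $\Cliff(C) \leq k-1$) on $C$ itself. Keeping the neighbouring fibers nodal via \autoref{thm:codim2}, together with the theory of linear series and the Clifford index for ribbons developed in \cite{bay.eis:95}, is what controls this. As a simplification one may, by \autoref{thm:carpet}, replace $C$ at the outset with any single convenient ribbon of genus $2k+1$ and Clifford index $k$---a general ribbon of this genus has Clifford index $k$, and by \autoref{thm:smooth} and Fong's smoothability it is smoothable---so that the degeneration and its limiting linear series can be taken in the cleanest possible form.
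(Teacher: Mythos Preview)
Your plan coincides with the paper's: reduce via the preceding proposition and \eqref{eqn:D} to showing $[C]\notin D_{k+1}$. The paper then dispatches this in one line by citing \autoref{thm:sc}: the Clifford index is semi-continuous between a ribbon and its smoothings, so a ribbon of Clifford index $k$ cannot lie in the closure of the locus of smooth curves of Clifford index at most $k-1$.

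Where you diverge is in trying to re-derive this semi-continuity by hand---taking a one-parameter family and extracting a limiting pencil $L$ on $C$. The gap you yourself flag is genuine and is exactly what \autoref{thm:sc} packages: since $C$ is non-reduced, the limit of the $g^1_{k+1}$'s need not be a line bundle on $C$, and even after compactifying to a rank-one torsion-free sheaf one must connect the result to the ribbon Clifford index, which is defined via the splitting of \eqref{eqn:cotangent} rather than directly via line bundles. Eisenbud and Green handle precisely this through their theory of generalized linear series on ribbons; completing your sketch would amount to reproducing their argument. Your appeal to \autoref{thm:codim2} does not help here, as it only controls the singularities of the \emph{neighbouring} fibers while the obstruction to taking the limit lives entirely at the central fiber $C$. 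The simplification via \autoref{thm:carpet} at the end is likewise unnecessary once \autoref{thm:sc} is available. Just cite \autoref{thm:sc}.
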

\begin{proof}
  Note that the smooth curves parametrized by $D_{k+1}$ have Clifford index at most $k-1$.
  By upper semi-continuity (\autoref{thm:sc}), we see that $[C] \not \in D_{k+1}$, and hence $[C] \not \in D$ by \autoref{prop:equality_of_divisors}.
  By \eqref{eqn:D}, this is equivalent to $K_{k-1,2}(C, \omega_C) = 0$.
\end{proof}

\section{Proof for all ribbons}\label{sec:all}
We now deduce the canonical syzygy conjecture for all ribbons from \autoref{thm:generic}.
The basic idea is to compare the Koszul cohomology group of a singular curve to that of its blow-up.
This idea appears already in the work of Voisin \cite{voi:02}, but we can exploit it much more for ribbons because they are singular everywhere!

\begin{lemma}
  \label{thm:blow-up}
  Let $C$ be a ribbon and $\beta \from C' \to C$ the blow up at a closed point.
  Then we have an inclusion $K_{k,1}(C', \omega_{C'}) \subset K_{k,1}(C, \omega_C)$.
\end{lemma}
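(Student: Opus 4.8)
The plan is to deduce the inclusion from the $\Tor$-description of Koszul cohomology, after reducing $K_{k,1}(C,\omega_C)$ to a computation over $\Sym H^0(C',\omega_{C'})$ by means of a general hyperplane section.

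First I would identify homogeneous coordinate rings. Since $\beta$ is proper with finite fibres it is a finite morphism, so $\beta_*$ is exact and $H^i(C',\mathcal F)=H^i(C,\beta_*\mathcal F)$ for all $\mathcal F$. As $\omega_{C'}=\beta^*\omega_C(-E)$, the projection formula gives $\beta_*(\omega_{C'}^{\otimes q})=\omega_C^{\otimes q}\otimes\beta_*\O_{C'}(-qE)$, and a direct computation on the blow-up of $\spec\k[s,\epsilon]/(\epsilon^2)$ at its closed point shows $\beta_*\O_{C'}(-qE)=I_x^q$, the $q$-th power of the ideal sheaf $I_x$ of $x$. Hence, writing $R_C:=\bigoplus_{q\ge 0}H^0(C,\omega_C^{\otimes q})$ and $R_{C'}:=\bigoplus_{q\ge 0}H^0(C',\omega_{C'}^{\otimes q})$, the ring $R_{C'}$ is identified, as a graded ring, with the subring $N:=\bigoplus_{q\ge 0}H^0(C,\omega_C^{\otimes q}\otimes I_x^q)$ of $R_C$. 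In degree one this realizes $V':=H^0(C',\omega_{C'})$ as the subspace $H^0(C,\omega_C\otimes I_x)$ of $V:=H^0(C,\omega_C)$, and it is a hyperplane because $h^0(\omega_{C'})=g-1=h^0(\omega_C)-1$.

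Next choose $s_0\in V\setminus V'$. Since $s_0$ does not vanish on $C_{\red}=\P^1$ — the unique associated point of $\O_C$ — it is a non-zerodivisor on $R_C$, so $\Tor_{>0}^{\Sym V}(R_C,\Sym V')=0$ and the change-of-rings spectral sequence for $\Sym V\twoheadrightarrow\Sym V'=(\Sym V)/(s_0)$ collapses to an isomorphism $K_{p,q}(C,\omega_C)=\Tor^{\Sym V}_p(R_C,\k)_{p+q}\cong\Tor^{\Sym V'}_p(R_C/s_0R_C,\k)_{p+q}$. On the other hand $K_{p,q}(C',\omega_{C'})=\Tor^{\Sym V'}_p(N,\k)_{p+q}$ directly. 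The composite $\psi\colon N\hookrightarrow R_C\twoheadrightarrow R_C/s_0R_C$ is a morphism of graded $\Sym V'$-modules, and, because $V=V'\oplus\k s_0$, it is an isomorphism in degrees $0$ and $1$.

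Finally I would carry out the comparison. For a graded $\Sym V'$-module $A$ the group $\Tor^{\Sym V'}_k(A,\k)_{k+1}$ is the cohomology at the middle term of $\wedge^{k+1}V'\otimes A_0\to\wedge^k V'\otimes A_1\to\wedge^{k-1}V'\otimes A_2$, so $\Tor^{\Sym V'}_k(\psi,\k)_{k+1}$ is induced on these complexes by $\operatorname{id}_{\wedge^\bullet V'}\otimes\psi$; since $\psi_0$ and $\psi_1$ are isomorphisms, this map is an isomorphism on the first two terms, it therefore identifies the image of the incoming differential and carries the kernel of the outgoing differential into that of the corresponding one, and as $\operatorname{id}\otimes\psi_1$ is injective a one-line diagram chase shows the induced map on (kernel)/(image) is injective. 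Thus $\Tor^{\Sym V'}_k(\psi,\k)_{k+1}\colon K_{k,1}(C',\omega_{C'})\to K_{k,1}(C,\omega_C)$ is injective, which is the assertion. The single step that is special to ribbons — and the one I would take care over — is the local identification $\beta_*\O_{C'}(-qE)=I_x^q$: because the reduced point of a ribbon is not a Cartier divisor, $\beta$ is not an isomorphism and $\beta_*\O_{C'}\neq\O_C$, so one cannot merely pull sections back and must follow the blow-up in the chart $\spec\k[s,\epsilon]/(\epsilon^2)$.
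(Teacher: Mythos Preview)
Your argument is correct, but it takes a different route from the paper's.  The paper works directly with the Koszul complexes: from $\omega_{C'} = \beta^*\omega_C(-E)$ one gets inclusions $H^0(C',\omega_{C'}^l)\hookrightarrow H^0(C,\omega_C^l)$ and hence a map of Koszul complexes over $V'\hookrightarrow V$.  The key observation (due to Voisin) is that the first Koszul differential $\delta\colon\wedge^{k+1}U\to\wedge^k U\otimes U$ admits the wedge product as a retraction, namely $\wedge\circ\delta=(k+1)\id$; a short computation with this retraction shows that any element of $\wedge^kV'\otimes V'$ that becomes a coboundary in $\wedge^kV\otimes V$ was already one, giving injectivity on $K_{k,1}$.

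Your approach instead passes through the $\Tor$ description: you mod out $R_C$ by a non-zerodivisor $s_0\in V\setminus V'$ to rewrite $K_{k,1}(C,\omega_C)$ as $\Tor^{\Sym V'}_k(R_C/s_0R_C,\k)_{k+1}$, then compare with $\Tor^{\Sym V'}_k(R_{C'},\k)_{k+1}$ via the map $\psi\colon R_{C'}\to R_C/s_0R_C$, which is an isomorphism in degrees $0$ and $1$; the injectivity on $K_{k,1}$ then follows from an elementary diagram chase.  This is the ``hyperplane section'' viewpoint, and it is perfectly valid.  The local identification $\beta_*\O_{C'}(-qE)=I_x^q$ does hold for $q\ge 1$ (your computation in the chart $\spec\k[s,\epsilon]/\epsilon^2$ gives $C'=\spec\k[s,u]/u^2$ with $\epsilon=su$ and $I_x\cdot\O_{C'}=(s)$, and then $s^q\k[s,u]/u^2 = (s^q,s^{q-1}\epsilon)=I_x^q$), though strictly speaking it fails for $q=0$ since $\beta_*\O_{C'}\ne\O_C$; this does not affect the argument because $H^0(\O_{C'})=H^0(\O_C)=\k$ globally.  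The trade-off is that the paper's retraction trick is shorter and entirely elementary, while your approach invokes more machinery (change of rings, the $\Tor$ description) but fits into a standard commutative-algebra framework and makes the role of the hyperplane $V'\subset V$ transparent.
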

\begin{proof}
  This follows by the same argument as in \cite[Corollary~1]{voi:02}.
  We reproduce the details for completeness.

  Since $\omega_{C'} = \beta^* \omega_C(-E)$ where $E$ is the exceptional divisor of $\beta$, we have inclusions $H^0(C', \omega^l_{C'}) \to H^0(C, \omega^l_C)$ for $l \geq 0$.
  
  Set $V'= H^0(C', \omega_{C'})$ and $V = H^0(C, \omega_C)$.
  Consider the Koszul complexes
  \[  
  \begin{tikzcd}
    \wedge^{k+1} V' \arrow{r}{\delta'} \arrow{d}{j_1} &\wedge^k V'\otimes V' \arrow{r}\arrow{d}{j_2} &\wedge^{k-1}V' \otimes H^0(C', \omega_{C'}^2) \arrow{r} \arrow{d} & \dots \\
    \wedge^{k+1} V \arrow{r}{\delta} &\wedge^k V\otimes V \arrow{r} &\wedge^{k-1}V \otimes H^0(C, \omega_{C}^2) \arrow{r}  & \dots
  \end{tikzcd}
  \]
  For any vector space $U$, the Koszul differential $\delta \from \wedge^{k+1} U \to \wedge^k U \otimes U$ has a retract $\wedge \from \wedge^k U \otimes U \to \wedge^{k+1} U$ given by the wedge product.
  Precisely, the two are related by
  \[ \wedge \circ \delta = (k+1)\id.\]
  Suppose $\alpha' \in \wedge^k V' \otimes V'$ is such that $j_2(\alpha') = \delta(\beta)$ for some $\beta \in \wedge^{k+1}V$.
  Then
  \[\wedge \circ j_2(\alpha') = \wedge \circ \delta (\beta) = (k+1)\beta.\]
  It is easy to see that $\wedge \circ j_2(\alpha') = j_1 \circ \wedge(\alpha')$.
  Set $\beta' = \wedge(\alpha')/(k+1)$.
  Then
  \[j_2 (\delta'(\beta')) = \delta(j_1(\beta')) = \delta(\beta)  = j_2(\alpha').\]
  Since $j_2$ is injective, we get $\delta'(\beta') = \alpha'$.
  In other words, any element of $\wedge^kV' \otimes V'$ that becomes a coboundary in $\wedge^kV \otimes V$ is already a coboundary.
  Therefore the map on cohomology $K_{k,1}(C',\omega_{C'}) \to K_{k,1}(C, \omega_C)$ is injective.  
\end{proof}

\begin{theorem}
  Let $C$ be a ribbon of genus $g$ and Clifford index $c$.
  Then $K_{p,2}(C, \omega_C) = 0$ if and only if $p < c$.
\end{theorem}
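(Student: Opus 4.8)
The statement has two directions. For the ``only if'' direction, suppose $p < c = \Cliff(C)$; we must show $K_{p,2}(C,\omega_C) = 0$. The idea is to reduce to the maximal Clifford index case already handled in \autoref{thm:generic} by blowing up. By \autoref{thm:blow-up-clifford}, we can find a sequence of $c$ point blow-ups $C_c \to \cdots \to C_0 = C$ with $C_c$ hyperelliptic, and at each stage $C_{i+1} \to C_i$ drops the genus by one and the Clifford index by at most one (and by exactly one if we choose the blow-ups along an optimal chain). The plan is to pick the chain so that after $c - 1 - p$ blow-ups we land on a ribbon $C'$ of genus $g' = g - (c-1-p)$ and Clifford index $c' = p+1$, and then... wait, that is still not maximal. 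Let me instead aim directly: I want a ribbon of \emph{odd} genus $2k+1$ with maximal Clifford index $k$, so that the Corollary to \autoref{thm:generic} applies to kill $K_{k-1,2}$.

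Concretely, the plan is as follows. First, translate the $K_{p,2}$ statement into a $K_{p',1}$ statement on a blown-up ribbon using the duality $K_{p,2}(C,\omega_C) \cong K_{g-2-p,1}(C,\omega_C)^*$, valid because the canonical ribbon is arithmetically Gorenstein (this is built into the definition of $\stack U$, and \cite[Theorem~5.3]{bay.eis:95} gives it for non-hyperelliptic ribbons; the hyperelliptic case is separate and easy). Then \autoref{thm:blow-up} gives, for each point blow-up $\beta \from C' \to C$, an inclusion $K_{k,1}(C',\omega_{C'}) \subset K_{k,1}(C,\omega_C)$; iterating along a chain of blow-ups $C^{(r)} \to \cdots \to C$ of length $r$ yields $K_{k,1}(C^{(r)}, \omega_{C^{(r)}}) \subset K_{k,1}(C, \omega_C)$ where $C^{(r)}$ has genus $g - r$. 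So to prove $K_{g-2-p,1}(C,\omega_C) \neq 0$ is \emph{impossible} when $p<c$... no — I want the vanishing, so this inclusion goes the wrong way. The correct use: to show $K_{p,2}(C,\omega_C) = 0$ for $p < c$, dualize to $K_{g-2-p,1}$, and note that a vanishing of $K$-groups on $C$ does not follow from one on a blow-up via this inclusion. Instead the argument must run: the \emph{hyperelliptic} ribbon $H$ of genus $g$ has $K_{p,2}(H,\omega_H) = 0$ for $p < 1 = \Cliff(H)$ trivially (there is nothing to prove, $p<1$ means $p \le 0$). For $c \ge 2$ we use \autoref{thm:carpet}: the Betti numbers $\dim K_{p,q}(C,\omega_C)$ depend only on $g$ and $c$, so it suffices to prove the theorem for \emph{one} ribbon of each $(g,c)$.

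Thus the real plan is: fix $(g,c)$ with $c \ge 1$. For the ``only if'' direction, it suffices to exhibit \emph{some} ribbon $C$ of genus $g$, Clifford index $c$, with $K_{p,2}(C,\omega_C)=0$ for all $p<c$. Embed $C$ as a hyperplane section of the surface $X_{g,c}$ of \autoref{thm:carpet}; or, more robustly, degenerate/specialize: realize $C$ as a limit of smooth curves. Actually the cleanest route, mirroring Aprodu's ``$K_{p,1}$ theorem'' strategy, is: (i) reduce via duality to showing $K_{g-c,1}(C,\omega_C) \neq 0$ for the \emph{non}-vanishing half and $K_{g-2-p,1} = 0$ i.e. $K_{p,2}=0$ for $p<c$ for the vanishing half; (ii) for the vanishing, use that blowing up $C$ down to a hyperelliptic ribbon in $c$ steps, combined with \autoref{thm:blow-up} applied contrapositively and the maximal-Clifford-index input \autoref{thm:generic}, forces the vanishing. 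Precisely: if $K_{c-1,2}(C,\omega_C) \ne 0$, I want a contradiction; by duality and \autoref{thm:blow-up} I can transport a nonzero class through blow-ups and compare against the corollary to \autoref{thm:generic}, which says a suitable odd-genus maximal-Clifford-index ribbon has the relevant group zero; since $K_{p,2}$ for $p \le c-2$ embeds into $K_{c-1,2}$ after a blow-up (lowering $c$ by one), an induction on $c$ handles all $p < c$ once the top case $p = c-1$ is done.

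For the ``if'' direction — that $K_{p,2}(C,\omega_C) \neq 0$ when $p \geq c$ — I would argue by semicontinuity and the known non-vanishing for smooth curves. A ribbon of genus $g$ and Clifford index $c$ is smoothable (Fong \cite{fon:93}) to a curve of Clifford index $\le c$ (\autoref{thm:sc}); combined with a general such curve actually achieving Clifford index $c$, and the fact that $K_{c,2} \neq 0$ is an open condition (a rank drop of $\delta$), non-vanishing propagates from the general smooth curve to the ribbon. Alternatively, and without smoothing, one uses $K_{c,2}(C,\omega_C) \cong K_{g-2-c,1}(C,\omega_C)^*$ and produces an explicit nonzero linear syzygy from the Clifford-index-computing pencil on $C$ (a length-two sub-ribbon structure, or the $\O_{\P^1}(-a-2)$-summand of $\Omega_C|_{\P^1}$), exactly as in the smooth case one builds $K_{p,1}$ classes from a $g^1_d$.

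\textbf{Main obstacle.} The delicate point is the ``only if'' direction at the \emph{boundary} case $p = c-1$: pushing the known vanishing for odd-genus maximal-Clifford-index ribbons (the corollary to \autoref{thm:generic}) to arbitrary $(g,c)$ via blow-ups, while keeping track of parity of the genus and ensuring the blow-down chain realizes the Clifford index drop at every step (so that one genuinely lands on a \emph{maximal} Clifford index ribbon and not merely a non-maximal one, where \autoref{thm:generic} says nothing). This is where \autoref{thm:carpet} is essential — it lets us replace ``every ribbon of type $(g,c)$'' by ``one convenient ribbon of type $(g,c)$'' — and where the interplay of \autoref{thm:blow-up}, the Gorenstein duality, and an induction on $c$ must be orchestrated carefully.
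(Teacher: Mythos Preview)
Your proposal assembles all the right ingredients---the carpet theorem (\autoref{thm:carpet}) to reduce to one ribbon per $(g,c)$, Gorenstein duality $K_{p,2}\cong K_{g-2-p,1}^{*}$, the blow-up inclusion of \autoref{thm:blow-up}, and the odd-genus maximal-Clifford-index input (the Corollary to \autoref{thm:generic})---but the central step is set up in the wrong direction, and you never repair it. Every blow-up chain you write down starts \emph{from} $C$ and goes to lower genus (``$C_c\to\cdots\to C_0=C$'', ``$C^{(r)}\to\cdots\to C$''). As you yourself notice, \autoref{thm:blow-up} then only gives $K_{k,1}(C^{(r)})\subset K_{k,1}(C)$, which cannot force vanishing on $C$. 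The fix, which is exactly what the paper does, is to go the other way: write $g=2k+1-i$ and $c=k-i$ with $i\ge 0$, start from a ribbon $\overline C$ of genus $2k+1$ and \emph{maximal} Clifford index $k$, and take its optimal blow-up chain $C_k\to\cdots\to C_0=\overline C$ from \autoref{thm:blow-up-clifford}. Then $C:=C_i$ has genus $g$ and Clifford index $c$, and repeated use of \autoref{thm:blow-up} gives $K_{k,1}(C)\hookrightarrow K_{k,1}(\overline C)$. The Corollary to \autoref{thm:generic} plus duality on $\overline C$ kills the right-hand side, so $K_{k,1}(C)=0$, and duality on $C$ (with $g-2-k=c-1$) yields $K_{c-1,2}(C)=0$; finally $K_{p,2}(C)=0$ for all $p\le c-1$ by the standard monotonicity, and \autoref{thm:carpet} propagates this to every ribbon of type $(g,c)$. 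Your ``Main obstacle'' paragraph gestures at a ``blow-down chain'' landing on a maximal-Clifford-index ribbon, but nowhere do you actually construct $C$ as a blow-up of such an $\overline C$; without that, the argument does not close.

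Two smaller points. First, your ``if'' and ``only if'' labels are swapped relative to the statement: the vanishing ($p<c\Rightarrow K_{p,2}=0$) is the ``if'' direction, and the non-vanishing is the ``only if''. Second, for the non-vanishing you do not need smoothing or semicontinuity (and ``$K_{c,2}\ne 0$ is an open condition'' is backwards---vanishing is open); the paper simply cites \cite[Corollary~7.3]{bay.eis:95}, which proves it directly for ribbons.
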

\begin{proof}
  The ``only if'' direction is the `easy' direction;
  it is the content of \cite[Corollary~7.3]{bay.eis:95}.

  We now prove the other direction.
  For $c = 0$, there is nothing to prove.
  Henceforth, we assume $c \geq 1$.
  In particular, $C$ is non-hyperelliptic.
  Recall the following:
  \begin{enumerate}
  \item $K_{p,q}(C, \omega_C) = 0$ implies $K_{p',q}(C, \omega_C) = 0$ for all $p' \leq p$,
  \item we have the duality $K_{p,q}(C, \omega_C) = K_{g-2-p,3-q}(C, \omega_C)^*$.
  \end{enumerate}
  Since all canonically embedded ribbons of genus $g$ and Clifford index $c$ have the same graded betti numbers (\autoref{thm:carpet}), it suffices to prove the theorem for one such ribbon.
  That is, we must show that $K_{c-1,2}(C, \omega_C) = 0$ for one ribbon of genus $g$ and Clifford index $c$.

  By the definition of the Clifford index, we have $2c \leq g-1$.
  Write $g = 2k+1-i$ and $c = k-i$ for integers $k$ and $i$ with $k \geq 1$ and $k \geq i \geq 0$.
  Let $\overline C$ be a ribbon of genus $2k+1$ and maximum Clifford index $k$.
  By \autoref{thm:blow-up-clifford}, there is a sequence of blow-ups
  \[ C_{k} \to \dots \to C_i \to \dots \to C_0 = \overline C,\]
  where $C_{k}$ is hyperelliptic.
  In this sequence, $C = C_i$ is a ribbon of genus $g = 2k+1-i$ and Clifford index $c = k-i$.
  By \autoref{thm:generic}, we know that $K_{k-1,2}\left(\overline C, \omega_{\overline C}\right) = 0$, which gives $K_{k,1}\left(\overline C, \omega_{\overline C}\right) = 0$ by duality.
  By repeated applications of \autoref{thm:blow-up}, we deduce that $K_{k,1}(C, \omega_{C}) = 0$.
  Note that $g-2-k = k-i-1 = c-1$.
  Therefore, we get $K_{c-1,2}(C, \omega_C)=0$ by duality.
  The proof of the theorem is thus complete.
\end{proof}

\bibliographystyle{abbrv}
\def\cprime{$'$}

\end{document}